\newcommand{\rt}{\rightarrow}
\newcommand{\lrt}{\longrightarrow}
\newcommand{\va}{\vartheta}
\newcommand{\st}{\stackrel}
\newcommand{\la}{\lambda}
\newcommand{\La}{\Lambda}
\newcommand{\Ga}{\Gamma}
\newcommand{\ga}{\gamma}
\newcommand{\N}{\mathbb{N} }
\newcommand{\CA}{\mathcal{A} }
\newcommand{\CC}{\mathcal{C} }
\newcommand{\CF}{\mathcal{F} }
\newcommand{\CG}{\mathcal{G} }
\newcommand{\CI}{\mathcal{I} }
\newcommand{\CK}{\mathcal{K} }
\newcommand{\CL}{\mathcal{L} }
\newcommand{\CP}{\mathcal{P} }
\newcommand{\CQ}{\mathcal{Q} }
\newcommand{\CR}{\mathcal{R} }
\newcommand{\CT}{\mathcal{T} }
\newcommand{\CX}{\mathcal{X} }
\newcommand{\CY}{\mathcal{Y} }
\newcommand{\CZ}{\mathcal{Z} }
\newcommand{\CB}{\mathcal{B} }
\newcommand{\PP}{\mathbf{P}}
\newcommand{\mmod}{{\rm{{mod\mbox{-}}}}}
\newcommand{\mmodd}{{\rm{mod}}_0\mbox{-}}
\newcommand{\inj}{{\rm{inj}\mbox{-}}}
\newcommand{\prj}{{\rm{prj}\mbox{-}}}
\newcommand{\GPrj}{\rm{{GPrj}\mbox{-}}}
\newcommand{\Gprj}{\rm{{Gprj}\mbox{-}}}
\newcommand{\GInj}{{\rm{GInj}\mbox{-}}}
\newcommand{\Aus}{{\rm{\bf{Aus}}}}
\newcommand{\im}{{\rm{Im}}}
\newcommand{\add}{{\rm{add}}}
\newcommand{\gen}{{\rm{gen}}}
\newcommand{\cogen}{{\rm{cogen}}}
\newcommand{\pd}{{\rm{pd}}}
\newcommand{\id}{{\rm{id}}}
\newcommand{\Gdim}{{\rm{Gdim}}}
\newcommand{\domdim}{{\rm{domdim}}}
\newcommand{\Coker}{{\rm{Coker}}}
\newcommand{\Ker}{{\rm{Ker}}}
\newcommand{\Hom}{{\rm{Hom}}}
\newcommand{\Ext}{{\rm{Ext}}}
\newcommand{\End}{{\rm{End}}}
\newcommand{\TTF}{\textsf{TTF}}
\theoremstyle{plain}
\newtheorem{theorem}{Theorem}[section]
\newtheorem{corollary}[theorem]{Corollary}
\newtheorem{lemma}[theorem]{Lemma}
\theoremstyle{definition}
\newtheorem{definition}[theorem]{Definition}
\newtheorem{example}[theorem]{Example}
\newtheorem{remark}[theorem]{Remark}
\theoremstyle{plain}
\theoremstyle{definition}
\numberwithin{equation}{section}
\begin{document}

\title[Relative Auslander algebras]{On relative Auslander algebras}

\author[Javad Asadollahi and Rasool Hafezi ]{Javad Asadollahi and  Rasool Hafezi }

\address{Department of Mathematics, University of Isfahan, P.O.Box: 81746-73441, Isfahan, Iran}
\email{asadollahi@ipm.ir, asaddollahi@sci.ui.ac.ir}

\address{School of Mathematics, Institute for Research in Fundamental Sciences (IPM), P.O.Box: 19395-5746, Tehran, Iran }
\email{hafezi@ipm.ir}

\subjclass[2010]{16G60, 16G50, 18A25, 18G25, 16S50, 16S90}

\keywords{Auslander algebras, recollements, \TTF-triples, algebras of finite CM-type}

\begin{abstract}
Relative Auslander algebras were introduced and studied by Beligiannis. In this paper, we apply intermediate extension functors associated to certain recollements of functor categories to study them. In particular, we study the existence of tilting-cotilting modules over such algebras. As a consequence, it will be shown that two Gorenstein algebras of G-dimension 1 being of finite Cohen-Macaulay type are Morita equivalent if and only if their Cohen-Macaulay Auslander algebras are Morita equivalent.
\end{abstract}

\maketitle

%\tableofcontents

\section{Introduction}
Let $\La$ be an artin algebra and $\mmod\La$ denote the category of finitely generated right $\La$-modules. A subclass $\CX$ of $\mmod\La$ is called of finite type if the set of all iso-classes of indecomposable modules of $\CX$ is finite. If $\mmod\La$ itself is of finite type, then $\La$ is called of finite representation type. Let $\La$ be such algebra and $X$ be an additive generator of $\mmod\La$, i.e. $X$ is the direct sum of representatives of the iso-classes of indecomposable $\La$-modules. Auslander \cite{Aus74} studied the algebra $\End_{\La}(X)$, which is known as the Auslander Algebra of $\La$. Beligiannis \cite[\S 6]{Be}, introduced a relative version of Auslander algebras, with respect to a contravariantly finite resolving subcategory $\CX$ of $\mmod\La$ of finite type. The natural candidate for $\Aus(\CX)$, the $\CX$-relative Auslander algebra of $\La$, is the endomorphism ring of an additive generator of $\CX$.

On the other hand, recently some authors study algebras admitting tilting-cotilting modules. For instance, Crawley-Boevey and Sauter \cite[Lemma 1.1]{CS} showed that an artin algebra $\Ga$ is an Auslander algebra if and only if its global dimension is at most $2$ and the full subcategory $\CC_{\Ga}$ of $\mmod\Ga$ consisting of all modules that are generated and are cogenerated by the direct sum of the representatives of the isomorphism classes of all injective-projective $\Ga$-modules contains a tilting module. Moreover, they showed that in this case, there is a unique basic such module which is also a cotilting module. Later on, without any assumption on the global dimension of $\Ga$, the authors of \cite{NRTZ} showed that dominant dimension of $\Ga$ is at least $2$ if and only if $\CC_{\Ga}$ contains either a tilting or a cotilting module, where by definition, dominant dimension of $\Ga$ is at least $n$ if the first $n$ terms of the minimal injective resolution of the $\Ga$-module $\Ga$ are projectives. They therefore, provided a generalization of \cite[Lemma 1.1]{CS}. Moreover, they
\cite[Theorem 2.4.11]{NRTZ} showed that an artin algebra $\Ga$ is a $1$-Auslander-Gorenstein algebra if and only if $\CC_{\Ga}$ contains a tilting-cotilting module. Recall that the notion of $n$-Auslander-Gorenstein algebras defined recently by Iyama and Solberg \cite{IS} as artin algebras $\Ga$ with the property that $\id\Ga_{\Ga} \leq n+1 \leq \domdim \Ga.$ The most recent attempt in this direction is the work of Pressland and Sauter that, among other results, gave a characterization of $n$-Auslander-Gorenstein algebras as well as $n$-Auslander algebras via shifted and coshifted modules \cite[Theorem 3.9]{PS}.

In this paper, we study the relative Auslander algebras, in the sense of Beligiannis, and use functor categories methods to prove the existence of tilting-cotilting modules in certain cases. To this end, we use the notion of intermediate extension functor associated to a recollement. Let $\CX$ be a contravariantly finite subcategory of $\mmod\La$ containing projective $\La$-modules. In \cite{AHK}, a relative version of Auslander's Formula with respect to $\CX$ has been studied. More precisely, it is shown that for such $\CX$, there exists a recollement
\[\xymatrix{\mmodd \CX \ar[rr]^{\ell}  && \mmod \CX \ar[rr]^{\va} \ar@/^1pc/[ll]^{\ell_{\rho}} \ar@/_1pc/[ll]_{\ell_{\la}} && \mmod \La \ar@/^1pc/[ll]^{\va_{\rho}} \ar@/_1pc/[ll]_{\va_{\la}} }\]
of abelian categories, where $\mmod\CX$ is the abelian category consisting of all finitely presented contravariant functors from $\CX$ to $\CA b$ and $\mmodd\CX$ consists of all functors that vanish on $\La$. We use the intermediate extension functor associated to this recollement, denoted by $\zeta^{\CX}$.
If we assume further that $\CX$ is of finite type, then $\mmod\CX$ is equivalent to $\mmod\Aus(\CX)$ and hence we get a recollement
\[\xymatrix{\mmodd \Aus(\CX) \ar[rr]^{L}  && \mmod \Aus(\CX) \ar[rr]^{V} \ar@/^1pc/[ll]^{L_{\rho}} \ar@/_1pc/[ll]_{L_{\la}} && \mmod \La \ar@/^1pc/[ll]^{V_{\rho}} \ar@/_1pc/[ll]_{V_{\la}} }\]
which is equivalent to the above one. We use this equivalence to transfer the problems from $\Aus(\CX)$ to $\mmod\CX$, use functor categories methods there and then back to our own home. Specializing our results to the class of Gorenstein projective modules over Gorenstein algebras of finite Cohen-Macaulay type have interesting applications. For instance, we show that the condition of being gentle in Theorem 3.10 of \cite{CL} is redundant, see Corollary \ref{CL} below.

Furthermore, we show that the surjective map $\Aus$ defined in \cite{KZ} is injective when we restrict it to the class of Gorenstein algebras of G-dimension $1$. It is easy to see that it can not be injective for Gorenstein algebras of higher G-dimension, see Remark \ref{KZ} below. It is worth noting that Gorenstein algebras of G-dimension 1 are playing an important role in representation theory of finite dimensional algebras, which includes some important classes of algebras, e.g. the cluster-tilted algebras \cite{BMR} and \cite{BMRT}, 2-CY-tilted algebras \cite{KR}, or more generally the endomorphism algebras of cluster tilting objects in triangulated categories \cite{KZh}. See also \cite{GLS}, for more applications of the class of 1-Gorenstein algebras attached to symmetrizable generalized Cartan matrices.

\section{Preliminaries}
In this section, we recall some basic facts we need throughout the paper. Let us begin by recalling the notion of Gorenstein projective modules.

Let $A$ be a ring. A totally acyclic complex of projectives is an acyclic complex $\PP$ of projective $A$-modules such that the induced complex $\Hom_{A}(\PP, Q)$ is acyclic, for every projective $A$-module $Q$. Dually the notion of totally acyclic complex of injectives can be defined. The syzygies of a totally acyclic complex of projectives, respectively of injectives, are called Gorenstein projective, respectively Gorenstein injective, $A$-modules. The class of all Gorenstein projective, resp. Gorenstein injective, modules is denoted by $\GPrj A$, resp. $\GInj A$. We set $\Gprj A= \GPrj A \cap \mmod A$.

\s {\sc Recollements.}
Let $\CA$, $\CA'$ and $\CA''$ be abelian categories. By \cite{BBD} a recollement of $\CA$ with respect to $\CA'$ and $\CA''$, denoted by $\CR(\CA', \CA, \CA'')$ is a diagram
\[\xymatrix{\CA'\ar[rr]^{\ell \ \ }  && \CA \ar[rr]^{\va \ } \ar@/^1pc/[ll]_{\ell_{\rho}} \ar@/_1pc/[ll]_{\ell_{\la}} && \CA'' \ar@/^1pc/[ll]_{\va_{\rho}} \ar@/_1pc/[ll]_{\va_{\la}} }\]
of additive functors satisfying the following conditions:
\begin{itemize}
\item[$(i)$] $(\ell_{\la},\ell)$, $(\ell,\ell_{\rho})$, $(\va_{\la}, \va)$ and $(\va,\va_{\rho})$ are adjoint pairs.
\item[$(ii)$] $\ell$, $\va_{\la}$ and $\va_{\rho}$ are fully faithful.
\item[$(iii)$] $\im \ell= \Ker \va$.
\end{itemize}

\s {\sc Equivalence of recollements.} \label{Eq-Recollement}
Let $\CR(\CA', \CA, \CA'')$ be the above recollement of $\CA$. We say that it is equivalent to the recollement $\CR(\CB', \CB, \CB'')$ below
\[\xymatrix{\CB'\ar[rr]^{\iota \ \ }  && \CB \ar[rr]^{\nu \ } \ar@/^1pc/[ll]_{\iota_{\rho}} \ar@/_1pc/[ll]_{\iota_{\la}} && \CB'' \ar@/^1pc/[ll]_{\nu_{\rho}} \ar@/_1pc/[ll]_{\nu_{\la}} }\]
if there are equivalences $\Phi: \CA \lrt \CB$ and $\Phi'': \CA'' \lrt \CB''$ such that the following diagram
\[\xymatrix{\CA \ar[r]^{\va} \ar[d]_{\Phi} & \CA'' \ar[d]^{\Phi''} \\ \CB \ar[r]^{\nu} & \CB'' }\]
is commutative up to natural equivalences of functor. This is equivalent to say that there exists natural equivalences $\Phi': \CA' \lrt \CB'$ and $\Phi$ and $\Phi''$ above such that all six diagrams associated to the six functors of the recollements commute up to natural equivalences. See Definition 4.1 and Lemma 4.2 of \cite{PV}.

\s {\sc Intermediate extension functor.}\label{InterExtFunc}
Consider the recollement $\CR(\CA', \CA, \CA'')$. Since $(\va_{\la},\va)$ is an adjoint pair, for every $A \in \CA$ and $A'' \in \CA''$ there exists an isomorphism
\[\CA(\va_{\la}(A''),A)=\CA''(A'',\va(A)),\]
of abelian groups. If we set $A:=\va_{\rho}(A'')$, using the fact that the counit of adjunction $\eta:\va\va_{\rho} \lrt \id_{\CA''}$ is an isomorphism, we get a natural transformation
\[\ga: \va_{\la} \lrt \va_{\rho}.\]
The intermediate extension functor $\zeta: \CA'' \lrt \CA$ is defined by
\[\zeta(A'') :=\im(\ga_{A''})=\im(\va_{\la}(A'') \lrt \va_{\rho}(A'')). \]
We refer the reader to \cite[\S 2.1]{CS} for a list of the properties of this functor. In particular, it is shown \cite[Lemma 2.2]{CS} that $\zeta$ is a fully faithful functor that preserves indecomposable objects.

\s {\sc \TTF-triples.}\label{TTF-Triples}
Let $\CA$ be an abelian category. A pair $(\CX,\CY)$ of subcategories of $\CA$ is called a torsion pair if $\Hom_{\CA}(\CX,\CY)=0$ and moreover, for every object $A \in \CA$, there exists a short exact sequence
\[0 \lrt X_A \lrt A \lrt Y^A \lrt 0,\]
such that $X_A \in \CX$ and $Y^A \in \CY$. In this case, $\CX$ is called a torsion class and $\CY$ is called a torsion-free class.
The torsion pair $(\CX,\CY)$ is called hereditary if $\CX$ is closed under subobjects and is called cohereditary if $\CY$ is closed under quotients.

If $(\CX,\CY)$ and $(\CY,\CZ)$ are torsion pairs in $\CA$, then the triple $(\CX,\CY,\CZ)$ is called a torsion torsion-free triple, or simply a \TTF-triple, of $\CA$, see \cite[Definition 2.2]{PV}.

\s {\sc \TTF-triples and recollements.}\label{Rem-Bijection}
By \cite[Corollary 4.4]{PV}, if $\CA$ has enough projective and enough injective objects, then the equivalence classes of recollements of $\CA$, i.e. equivalence classes of recollements with $\CA$ as the middle term, are in bijection with the \TTF-triples in $\CA$. Based on this bijection associated to the recollement $\CR(\CA',\CA,\CA'')$ there exists a \TTF-triple  $(\Ker\ell_{\la},\ell(\CA'),\Ker\ell_{\rho})$ such that $(\Ker\ell_{\la},\ell(\CA'))$ is a cohereditary and $(\ell(\CA'),\Ker\ell_{\rho})$ is a hereditary torsion pair.
On the other hand, a \TTF-triple $(\CT,\CF,\CL)$ in $\CA$ induces a recollement
\[\xymatrix{\CF \ar[rr]^{\ell}  && \CA \ar[rr]^{v} \ar@/^1pc/[ll]^{{\ell}_{\rho}} \ar@/_1pc/[ll]_{\ell_{\la}} && \frac{\CA}{\CF}, \ar@/^1pc/[ll]^{v_{\rho}} \ar@/_1pc/[ll]_{v_{\la}} }\]
where $\ell$ is the inclusion and $v$ is the canonical quotient map. For the details of this bijection we refer the reader to \cite{PV}.

\section{$\CX$-intermediate extension functor}
Let $A$ be a right coherent ring and $\CX$ be a subcategory of $\mmod A$. An additive contravariant functor $F:\CX \rt \CA b$, where $\CA b$ denotes the category of abelian groups, is called a (right) $\CX$-module. An $\CX$-module $F$ is called finitely presented if there exists an exact sequence $$\CX( - ,X) \rt \CX( - ,X') \rt F \rt 0,$$ with $X$ and $X'$ in $\CX$. All finitely presented $\CX$-modules and natural transformations between them from a category that will be denoted by $\mmod\CX$ or sometimes ${\rm fp}(\CX^{op}, \CA b)$. It is known that if $\CX$ is a contravariantly finite subcategory of $\mmod A$ then $\mmod\CX$ is an abelian category, see \cite[\S 2]{AHK}.

\s \label{Our Recollement} Let $A$ be a right coherent ring and $\CX$ be a contravariantly finite subcategory of $\mmod A$ containing $\prj A$. Let $\mmodd\CX$ denote the Serre subcategory of $\mmod\CX$ consisting of all objects that vanish at $\La$ as a right $\La$-module. By \cite[Theorem 3.7]{AHK} we have a recollement
\[\xymatrix{ \mmodd \CX \ar[rr]^{\ell}  && \mmod \CX \ar[rr]^{\va} \ar@/^1pc/[ll]^{\ell_{\rho}} \ar@/_1pc/[ll]_{\ell_{\la}} && \mmod A \ar@/^1pc/[ll]^{\va_{\rho}} \ar@/_1pc/[ll]_{\va_{\la}} }\]

\noindent of abelian categories. As this recollement has a central role in this section, we recall explicitly some functors appearing in this recollement. This recollement will be denoted by $\CR(\CX,A)$.
\begin{itemize}
\item [$\mbox{-}$] $\va$: By \cite[Proposition 3.1]{AHK}, the functor $Y^A: (\mmod\CX, \mmod A) \lrt (\CX, \mmod A)$, that is induced from the Yoneda embedding $Y: \CX \lrt \mmod\CX$,  admits a left adjoint $Y^A_{\la}$. We set $\va:=Y^A_{\la}(\iota)$, where $\iota: \CX \lrt \mmod A$ is the inclusion functor. It is shown that $\va$ is an exact functor and if $\CX( - ,X_1) \st{(-,d)} \lrt \CX( - ,X_0) \lrt F \lrt 0$ is a projective presentation of $F$, then $\va(F)$ is the cokernel of the induced map $X_1 \st{d}{\lrt} X_0$.
\item [$\mbox{-}$] $\va_{\la}$: By \cite[Proposition 3.6]{AHK}, $\va_{\la}$ that is a left adjoint of $\va$, is defined as follows. Let $M \in \mmod \La$ and $P \st{d}{\lrt} Q \st{\varepsilon}{\lrt} M \lrt 0$ be a projective presentation of $M$ in $\mmod A$. Set
    \[\va_{\la}(M):= \Coker(\CX( - ,P) \lrt \CX( - ,Q)).\]

\item [$\mbox{-}$] $\va_{\rho}$: Let $M \in \mmod \La$. Set $\va_{\rho}(M):=\Hom_A( - ,M)\vert_{\CX}=( - ,M)\vert_{\CX}$. By Lemma 3.5 of \cite{AHK}, $\va_{\rho}$ is a full and faithful functor and by Proposition 3.6 of \cite{AHK} $\va_{\rho}$ is a right adjoint of $\va$. In case $M$ belongs to $\CX$, we simply write $\va_{\rho}(M)=\CX(-, M)$ without the $|$ sign.
\end{itemize}

\s {\bf Setup.} \label{Setup} Throughout, we assume that $\CX$ is a contravariantly finite subcategory of $\mmod A$ that contains $\prj A$.

\begin{definition}
As in \ref{InterExtFunc}, we can associate an intermediate extension functor to the recollement $\CR(\CX,A)$. Let us denote it by $\zeta^{\CX}$ and call it the $\CX$-intermediate extension functor. If $\CX=\mmod A$, we denote $\zeta^{\CX}$ by $\zeta^A$.
\end{definition}

\s {\bf Notation.} Let $M \in \mmod A$. For the ease of notation, we denote $\zeta^{\CX}(M) \in \mmod\CX$ by $\zeta^{\CX}_M$. Therefore, for $X \in \CX$, the action of $\zeta^{\CX}_M$ on $X$ will be denoted by $\zeta^{\CX}_M(X)$.

\begin{example}
An important example is $\CX=\Gprj\La$, where $\La$ is an artin algebra with the property that $\Gprj\La$ is a contravariantly finite subcategory of $\mmod\La$. This include, in particular, the class of all virtually Gorenstein algebras \cite[Proposition 4.7]{Be} and also the class of algebras of finite CM-type. Recall that virtually Gorenstein algebras introduced and studied in deep by Beligiannis and Reiten \cite{BR}. For simplicity, the relative intermediate extension functor associated to the recollement $\CR(\Gprj\La, \La)$ will be denoted by $\zeta^{\CG}$.
\end{example}

For an $A$-module $M$, consider a short exact sequence $0 \rt \Omega(M) \rt P \rt M \rt 0$ with $P$ projective. The module $\Omega(M)$ is then called a syzygy module of $M$. Note that syzygy modules of $M$ are not uniquely determined. An $n$th syzygy of $M$ will be denoted by $\Omega^n(M)$, for $n \geq 2$.

\begin{lemma}\label{Lem-Fundamental}
Let $M \in \mmod A$. Then there exits the following exact sequence
\[0 \lrt \CK_M \lrt \va_{\la}(M) \lrt \va_{\rho}(M) \lrt \CL_M \lrt 0,\]
in $\mmod \CX$, where $\CK_M$ and $\CL_M$ belong to $\mmodd\CX$.
\end{lemma}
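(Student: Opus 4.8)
The plan is to read the four required objects directly off the natural transformation $\ga\colon \va_{\la} \lrt \va_{\rho}$ from \ref{InterExtFunc}. Setting $\CK_M := \Ker(\ga_M)$ and $\CL_M := \Coker(\ga_M)$, where $\ga_M\colon \va_{\la}(M) \lrt \va_{\rho}(M)$ is the component at $M$, the existence of the four-term exact sequence
\[0 \lrt \CK_M \lrt \va_{\la}(M) \st{\ga_M}{\lrt} \va_{\rho}(M) \lrt \CL_M \lrt 0\]
is immediate in the abelian category $\mmod\CX$, with the intermediate extension $\zeta^{\CX}_M = \im(\ga_M)$ sitting in the middle. By axiom $(iii)$ of the recollement $\CR(\CX, A)$ we have $\mmodd\CX = \im\ell = \Ker\va$, so the whole content of the statement reduces to proving $\va(\CK_M) = 0$ and $\va(\CL_M) = 0$.

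The crux is to show that $\va(\ga_M)$ is an isomorphism. Because $\va_{\la}$ and $\va_{\rho}$ are fully faithful, the unit $u\colon \id \lrt \va\va_{\la}$ of $(\va_{\la}, \va)$ and the counit $\eta\colon \va\va_{\rho} \lrt \id$ of $(\va, \va_{\rho})$ are natural isomorphisms; in particular $\va\va_{\la}(M) \cong M \cong \va\va_{\rho}(M)$. Unwinding the definition of $\ga$ in \ref{InterExtFunc}, the morphism $\ga_M$ corresponds to $\eta_M^{-1}$ under the adjunction bijection $\Hom(\va_{\la}(M), \va_{\rho}(M)) \cong \Hom(M, \va\va_{\rho}(M))$, which sends $f$ to $\va(f) \circ u_M$. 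Hence $\va(\ga_M) \circ u_M = \eta_M^{-1}$, and since $u_M$ and $\eta_M$ are isomorphisms, so is $\va(\ga_M)$. This is exactly one of the general properties of intermediate extension functors recorded in \cite[\S 2.1]{CS}, so it may be quoted rather than reproved.

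With this in hand, applying the exact functor $\va$ to the four-term sequence and using $\va\va_{\la}(M) \cong M \cong \va\va_{\rho}(M)$ yields an exact sequence
\[0 \lrt \va(\CK_M) \lrt M \st{\cong}{\lrt} M \lrt \va(\CL_M) \lrt 0,\]
whose middle map is the isomorphism $\va(\ga_M)$; exactness then forces $\va(\CK_M) = 0$ and $\va(\CL_M) = 0$, so $\CK_M, \CL_M \in \Ker\va = \mmodd\CX$. The only non-formal ingredient is the identification of $\va(\ga_M)$ with an isomorphism; everything else is a formal consequence of the recollement axioms and the exactness of $\va$, so I expect that single step to be the main (and essentially only) obstacle.
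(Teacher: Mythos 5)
Your proof is correct, but it is not the paper's own argument for this lemma: it is essentially the formal route that the paper only sketches afterwards, in Remark \ref{Interpretation}, by appeal to \cite[Proposition 2.8]{PV}. The paper's actual proof is concrete: it fixes a projective presentation $P \rt Q \rt M \rt 0$, uses the explicit descriptions $\va_{\la}(M)=\Coker((-,P)\rt(-,Q))$ and $\va_{\rho}(M)=(-,M)|_{\CX}$, and extracts the four-term sequence from a Yoneda-lemma diagram together with the Snake lemma; membership of $\CK_M$ and $\CL_M$ in $\mmodd\CX$ then comes for free, because both arise as cokernels of maps of the form $(-,P)\rt(-,\Omega(M))|_{\CX}$ and $(-,Q)\rt(-,M)|_{\CX}$, which become surjective upon evaluation at the projective module $A$. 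You instead take $\CK_M=\Ker(\ga_M)$ and $\CL_M=\Coker(\ga_M)$ for the canonical map $\ga_M\colon \va_{\la}(M)\lrt\va_{\rho}(M)$, prove that $\va(\ga_M)$ is an isomorphism by pure adjunction formalities ($\va(\ga_M)\circ u_M=\eta_M^{-1}$, with $u_M$ and $\eta_M$ invertible by full faithfulness of $\va_{\la}$ and $\va_{\rho}$), and conclude from exactness of $\va$ together with the recollement axiom $\im\ell=\Ker\va$; each of these ingredients is available in the paper (exactness of $\va$ is stated in \ref{Our Recollement}, invertibility of $\eta$ in \ref{InterExtFunc}), so there is no gap. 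The trade-off is worth noting: your argument is shorter and works verbatim for any recollement of abelian categories, whereas the paper's computation yields explicit projective presentations of the terms involved --- in particular the short exact sequence $0 \lrt (-,\Omega(X)) \lrt (-,Q) \lrt \zeta^{\CX}_X \lrt 0$ --- which is precisely what gets reused later in Lemma \ref{Lem-pd} and in the proof of Theorem \ref{Main12}, so the concrete construction is not an incidental stylistic choice but feeds the rest of the paper.
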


\begin{proof}
Consider a projective presentation $P \lrt Q \lrt M \lrt 0$ of $X$ in $\mmod A$. In view of the Yoneda Lemma we have the following commutative diagram
\[ \xymatrix@C-0.5pc@R-0.5pc{ &&& 0 \ar[d] & \\ 0 \ar[r] & (-, \Omega^2(M))|_{\CX} \ar[r] \ar@{=}[d] & (-, P) \ar[r] \ar@{=}[d] & (-, \Omega(M))|_{\CX} \ar[r] \ar[d] & \CK_M \ar[r] & 0 \\
0\ar[r] & (-, \Omega^2(M))|_{\CX} \ar[r] & (-, P) \ar[r] & (-, Q) \ar[d] \ar[r] & \va_{\la}(M) \ar[r] \ar@{.>}[dl]^{\gamma_M} & 0 \\
&&& (-, M)|_{\CX} \ar[d] & & \\ &&& \CL_M \ar[d] &&\\
&&& 0 && }\]\label{Diagram}
Applying Snake lemma to the diagram
\[ \xymatrix{  0 \ar[r] & ( - ,P) \ar@{=}[r] \ar[d] & ( - , P) \ar[r] \ar[d] & 0 \ar[r] \ar[d]  & 0 \\\
0 \ar[r] & ( - ,\Omega(M))|_{\CX}  \ar[r] & ( - ,Q) \ar[r] & ( - ,M)|_{\CX} }\]
and using the fact that $\va_{\rho}(M)=( - ,M)|_{\CX}$, we get the exact sequence
\[\xymatrix{0 \ar[r] & \CK_M \ar[r] & \va_{\la}(M) \ar[r]^{\gamma_M} & \va_{\rho}(M) \ar[r] & \CL_M \ar[r] & 0.}\]
It follows from definition that $\CK_M$ and $\CL_M$ both belong to $\mmodd A$. Hence the proof is complete.
\end{proof}

\begin{remark}\label{Rem-Triangle}
By definition of $\zeta^{\CX}(X)$, we get the following two short exact sequences
\[\xymatrix{0 \ar[r] & \CK_M  \ar[r] & \va_{\la}(M) \ar[r] & \zeta^{\CX}_M \ar[r] & 0,}\]
and
\[\xymatrix{0 \ar[r] & \zeta^{\CX}_M \ar[r] & \va_{\rho}(M) \ar[r] & \CL_M \ar[r] & 0.}\]
\end{remark}

\begin{remark}\label{Interpretation}
The sequence of the above lemma also can be obtained using the units and counits of adjunctions of the functors appearing in the recollement $\CR(\CX,A)$, see \cite[Proposition 2.8]{PV}. In fact, for every functor $F$ in $\mmod\CX$, we have the following exact sequence
\[0 \lrt \ell\ell_{\rho}(F) \lrt F \st{\delta^{{\va}_{\rho}\va}_F} \lrt {\va}_{\rho}{\va}(F) \lrt {\rm{Coker}}{\delta^{{\va}_{\rho}{\va}}_F} \lrt 0,\]
where in view of the definition of the functors involved, this sequence has the following simpler form
\[0 \lrt F_0 \lrt F \lrt ( - ,\va(F))\vert_{\CX} \lrt F_1 \lrt 0,\]
where $F_0$ and $F_1$ are in $\mmodd\CX$. In particular, for every $M \in \mmod \La$, if we set $F=\va_{\la}(M)$, we get the following exact sequence
\[0 \lrt \CK_M \lrt \va_{\la}(M) \lrt ( - ,\va(\va_{\la}(M)))\vert_{\CX} \lrt \CL_M \lrt 0.\]
But $( - ,\va(\va_{\la}(M)))\vert_{\CX}=\va_{\rho}(M)$, because $\va(\va_{\la}(M))=M$. So we get the desired sequence.
\end{remark}

\begin{lemma}\label{Lem-pd}
Let $\CX$ be as in Setup \ref{Setup}.
\begin{itemize}
\item [$(i)$] If $\CX$ is closed under syzygies, then for every $X \in \CX$, projective dimension of $\zeta^{\CX}_X$ as an object of $\mmod\CX$ is at most one. In particular, if $X \in \prj A$, then $\zeta^{\CX}_X=\va_{\rho}(X)=( - ,X)$ is projective in $\mmod \CX$.
\item [$(ii)$] If $\CX$ is closed under submodules, then for every $M \in \mmod\La$, projective dimension of $(-, M)|_{\CX}$ as an object of $\mmod \CX$ is at most one.
\end{itemize}
\end{lemma}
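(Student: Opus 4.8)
The plan is to exhibit, in each case, a projective resolution of length one in $\mmod\CX$. The two facts driving everything are that the projective objects of $\mmod\CX$ are exactly the representable functors $\CX(-,Y)$ with $Y\in\CX$, and that the kernel of a map of representables induced by a morphism $f\colon Y\rt Y'$ in $\mmod A$ is computed, by left exactness of $\Hom$, as the subfunctor of maps factoring through $\Ker f$, i.e.\ as $(-,\Ker f)|_{\CX}$. The closure hypotheses are invoked precisely to guarantee that the relevant kernel modules again lie in $\CX$, so that these kernel functors are themselves representable (hence projective).

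For $(i)$, I would fix a projective presentation $P \st{d}{\lrt} Q \st{\ve}{\lrt} X \lrt 0$ in $\mmod A$; since $\prj A \subseteq \CX$ we have $P,Q \in \CX$, and coherence of $A$ together with closure under syzygies gives $\Omega(X)=\Ker\ve=\im d \in \CX$. The next step is to identify $\zeta^{\CX}_X$ explicitly. From the description of $\va_{\la}$ recalled for the recollement $\CR(\CX,A)$ we have $\va_{\la}(X)=\Coker(\CX(-,P)\to\CX(-,Q))$, and the map $\gamma_X$ of Lemma \ref{Lem-Fundamental} is the factorization of $\CX(-,\ve)$ through this cokernel (using $\ve d=0$); hence $\zeta^{\CX}_X=\im\gamma_X=\im\CX(-,\ve)\subseteq\CX(-,X)$. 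Identifying $\Ker\CX(-,\ve)$ with $\CX(-,\Omega(X))$ as above then yields the short exact sequence
\[0 \lrt \CX(-,\Omega(X)) \lrt \CX(-,Q) \lrt \zeta^{\CX}_X \lrt 0,\]
whose two left-hand terms are projective, so $\pd\zeta^{\CX}_X\le 1$. For the final assertion, if $X\in\prj A$ one takes $P=0$ and $Q=X$, so $\Omega(X)=0$ and the sequence collapses to $\zeta^{\CX}_X=\CX(-,X)=\va_{\rho}(X)$, which is projective.

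Part $(ii)$ runs along the same lines, with contravariant finiteness replacing the projective presentation as the source of a surjection onto $(-,M)|_{\CX}$. Given $M$, I would take a right $\CX$-approximation $f\colon X_0\lrt M$ with $X_0\in\CX$; its defining property---that every map from an object of $\CX$ into $M$ factors through $f$---says exactly that $\CX(-,f)\colon\CX(-,X_0)\lrt(-,M)|_{\CX}$ is an epimorphism in $\mmod\CX$. Writing $N=\Ker f$, coherence gives $N\in\mmod A$, and since $N\hookrightarrow X_0\in\CX$, closure under submodules gives $N\in\CX$. As before $\Ker\CX(-,f)=\CX(-,N)$, so we obtain the projective resolution
\[0\lrt\CX(-,N)\lrt\CX(-,X_0)\lrt(-,M)|_{\CX}\lrt 0,\]
and therefore $\pd(-,M)|_{\CX}\le 1$.

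The formal kernel identifications $\Ker\CX(-,\ve)=\CX(-,\Omega(X))$ and $\Ker\CX(-,f)=\CX(-,N)$ are routine consequences of left exactness of $\Hom$, so the real content lies in the two membership statements $\Omega(X)\in\CX$ and $N\in\CX$, which is exactly where the distinct hypotheses of $(i)$ and $(ii)$ enter. The one point that needs care is in $(ii)$: a naive projective presentation of $M$ does not in general induce an epimorphism of functors, since maps out of non-projective objects of $\CX$ may fail to lift along a projective cover of $M$. The correct substitute is the right $\CX$-approximation, whose very definition supplies the surjectivity of $\CX(-,f)$; this is the main obstacle, and it is furnished by the contravariant finiteness assumed in Setup \ref{Setup}.
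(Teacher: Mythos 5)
Your proof is correct and follows essentially the same route as the paper: part $(i)$ via a projective presentation of $X$ yielding the short exact sequence $0 \to (-,\Omega(X)) \to (-,Q) \to \zeta^{\CX}_X \to 0$, and part $(ii)$ via a right $\CX$-approximation whose kernel lies in $\CX$ by closure under submodules. You merely make explicit what the paper leaves to its earlier diagram (the identification of $\gamma_X$ with the factorization of $\CX(-,\ve)$ and the resulting computation of $\zeta^{\CX}_X$ as $\im\CX(-,\ve)$), which is a faithful unpacking rather than a different argument.
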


\begin{proof}
$(i).$ Let $X \in \CX$ and consider a projective presentation $P \st{d} \rt  Q \rt X \rt 0$ of $X$. The commutative diagram of the proof of Lemma \ref{Lem-Fundamental} for $M=X$ yields the exact sequence
\[0 \lrt ( - ,\Omega(X)) \lrt ( - ,Q) \lrt \zeta^{\CX}_X \lrt 0.\]
Since $\CX$ contains projectives and is closed under syzygies, we deduce that projective dimension of $\zeta^{\CX}_X$ is at most one.

Now assume that $X \in \prj A$. Then the argument above implies that $\zeta^{\CX}_X=( - ,X)$ and so is a projective object of $\mmod \CX$.

$(ii).$ Pick $M \in \mmod\La$. Since $\CX$ is contravariantly finite in $\mmodd\La$, there is a right $\CX$-approximation $f:X \rt M$. This induces the short exact sequence
$$0 \rt (-, \Ker(f)) \rt (-, X) \rt (-, M)|_{\CX} \rt 0$$ in $\mmod \CX.$ Since $\CX$ is closed under submodules, $(-, \Ker(f))$ is a projective object and so $\rm{pd}(-, M)|_{\CX}\leq 1.$
\end{proof}

\begin{lemma}\label{Lem-rigid}
Let $\CX$ be as in Setup \ref{Setup} which is also closed under syzygies and is contained in ${}^{\perp}A$. Then for every $X_1$ and $X_2$ in $\CX$, $$\Ext^1(\zeta^{\CX}_{X_1}, \zeta^{\CX}_{X_2})=0.$$ In particular, for every $X \in \CX$, $\zeta^{\CX}_X$ is rigid.
\end{lemma}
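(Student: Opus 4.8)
The plan is to compute $\Ext^1_{\mmod\CX}(\zeta^{\CX}_{X_1},\zeta^{\CX}_{X_2})$ directly from the short projective resolution supplied by Lemma \ref{Lem-pd}, turning the problem into a concrete surjectivity statement about maps in $\mmod A$. Since $\CX$ is closed under syzygies, I fix a projective presentation $P_1 \lrt Q_1 \lrt X_1 \lrt 0$ and write $\iota\colon \Omega(X_1) \hookrightarrow Q_1$ for the inclusion of the syzygy; then Lemma \ref{Lem-pd}$(i)$ provides the length-one projective resolution
\[0 \lrt (-,\Omega(X_1)) \st{(-,\iota)}{\lrt} (-,Q_1) \lrt \zeta^{\CX}_{X_1} \lrt 0\]
in $\mmod\CX$ (both $\Omega(X_1)$ and $Q_1$ lie in $\CX$, so the first two terms are representable, hence projective). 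Applying $\Hom_{\mmod\CX}(-,\zeta^{\CX}_{X_2})$ together with the Yoneda isomorphism $\Hom_{\mmod\CX}((-,Z),F) \cong F(Z)$ yields the exact sequence
\[\zeta^{\CX}_{X_2}(Q_1) \st{\zeta^{\CX}_{X_2}(\iota)}{\lrt} \zeta^{\CX}_{X_2}(\Omega(X_1)) \lrt \Ext^1_{\mmod\CX}(\zeta^{\CX}_{X_1},\zeta^{\CX}_{X_2}) \lrt 0,\]
so the asserted vanishing is equivalent to surjectivity of the evaluation map $\zeta^{\CX}_{X_2}(\iota)$.

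Next I would pin down the values of $\zeta^{\CX}_{X_2}$ on objects of $\CX$. Applying Lemma \ref{Lem-pd}$(i)$ to $X_2$ with a presentation $P_2 \lrt Q_2 \st{\pi_2}{\lrt} X_2 \lrt 0$ gives $0 \lrt (-,\Omega(X_2)) \lrt (-,Q_2) \lrt \zeta^{\CX}_{X_2} \lrt 0$, and evaluating this at an object $Y \in \CX$ (evaluation at $Y$ is exact on short exact sequences of $\mmod\CX$) produces
\[\zeta^{\CX}_{X_2}(Y) = \Coker\big(\Hom_A(Y,\Omega(X_2)) \lrt \Hom_A(Y,Q_2)\big) = \im\big(\Hom_A(Y,Q_2) \lrt \Hom_A(Y,X_2)\big),\]
the second equality coming from applying $\Hom_A(Y,-)$ to $0 \lrt \Omega(X_2) \lrt Q_2 \lrt X_2 \lrt 0$. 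Thus $\zeta^{\CX}_{X_2}(Y)$ is exactly the subgroup of $\Hom_A(Y,X_2)$ of morphisms factoring through the projective $Q_2$. In particular $\zeta^{\CX}_{X_2}(Q_1) = \Hom_A(Q_1,X_2)$, because every morphism out of the projective $Q_1$ factors through $\pi_2$.

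It then remains to prove surjectivity, and this is the one step carrying real content. Take $h \in \zeta^{\CX}_{X_2}(\Omega(X_1))$; by the identification above $h = \pi_2\alpha$ for some $\alpha\colon \Omega(X_1) \lrt Q_2$. I would lift $\alpha$ along $\iota$: applying $\Hom_A(-,Q_2)$ to the syzygy sequence $0 \lrt \Omega(X_1) \st{\iota}{\lrt} Q_1 \lrt X_1 \lrt 0$, the obstruction to extending $\alpha$ to some $\tilde\alpha\colon Q_1 \lrt Q_2$ lies in $\Ext^1_A(X_1,Q_2)$. Since $X_1 \in \CX \subseteq {}^{\perp}A$ and $Q_2$ is a finitely generated projective (hence a summand of a finite free module), one has $\Ext^1_A(X_1,Q_2)=0$, so $\tilde\alpha$ exists; then $g:=\pi_2\tilde\alpha$ satisfies $g\iota = \pi_2\alpha = h$, showing $\zeta^{\CX}_{X_2}(\iota)(g)=h$. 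Hence $\zeta^{\CX}_{X_2}(\iota)$ is onto and $\Ext^1_{\mmod\CX}(\zeta^{\CX}_{X_1},\zeta^{\CX}_{X_2})=0$; rigidity is the diagonal case $X_1=X_2$. Note where the two hypotheses enter: ``closed under syzygies'' forces the resolutions to have length one, so that $\Ext^1$ is a mere cokernel, whereas ``$\CX \subseteq {}^{\perp}A$'' is precisely what annihilates the lifting obstruction $\Ext^1_A(X_1,Q_2)$. The main obstacle is thus arranging the argument so that this single $\Ext$-vanishing over $A$ suffices; the only routine points are the exactness of evaluation and the reduction $\Ext^1_A(X_1,Q_2)=0$ to $\Ext^1_A(X_1,A)=0$ via finite generation of $Q_2$.
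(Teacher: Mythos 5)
Your proof is correct and follows essentially the same route as the paper's: reduce $\Ext^1$ to a surjectivity statement via the length-one projective resolution of $\zeta^{\CX}_{X_1}$, express elements of $\zeta^{\CX}_{X_2}$ as maps factoring through the projective presentation of $X_2$ (the paper does this by lifting a natural transformation through the epimorphism $(-,Q')\twoheadrightarrow\zeta^{\CX}_{X_2}$, which is the same move under Yoneda), and kill the lifting obstruction using $\Ext^1_A(X_1,A)=0$ from $\CX\subseteq{}^{\perp}A$. The only difference is cosmetic: you work with Yoneda-evaluated elements, the paper with natural transformations.
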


\begin{proof}
Fix $X_1$ and $X_2$ in $\CX$. Consider the projective resolution
\[0 \lrt ( - ,\Omega(X_1)) \lrt ( - ,Q) \lrt \zeta^{\CX}_{X_1} \lrt 0,\]
of $\zeta^{\CX}_{X_1}$ in $\mmod \CX$, where $P \lrt Q \lrt X_1 \lrt 0$ is a projective presentation of $X_1$. To prove the result, we should show that the morphism
\[(( - , Q), \zeta^{\CX}_{X_2}) \lrt (( - ,\Omega(X_1)), \zeta^{\CX}_{X_2})\]
is surjective. Let $P' \lrt Q' \lrt X_2 \lrt 0$ be a projective presentation of $X_2$. Since $\CX$ is closed under syzygies, the morphism $\varphi: ( - ,\Omega(X_1)) \lrt \zeta^{\CX}_{X_2}$ lifts to a morphism $\varphi': ( - ,\Omega(X_1)) \lrt ( - , Q')$. The Yoneda Lemma now induces the following diagram
\[\xymatrix{0 \ar[r] & \Omega(X_1) \ar[r] \ar[d] & Q \\ & Q'}\]
Since $\CX \subseteq {}^{\perp}\La$, the morphism $\Omega(X_1) \lrt Q'$ extends to a morphism $Q \lrt Q'$. This in turn induces a morphism
\[( - ,Q) \lrt ( - ,Q').\]

It is now plain that the composition $( - ,Q) \lrt ( - ,Q') \lrt \zeta^{\CX}_{X_2}$ maps to $\varphi$. This completes the proof.
\end{proof}

\begin{lemma}\label{Lem-id}
Let $\CX$ be as in Setup \ref{Setup} which is also closed under submodules and is contained in ${}^{\perp}A$. Then for every $X$ in $\CX$, the injective dimension of $\zeta^{\CX}_X$ is at most one.
\end{lemma}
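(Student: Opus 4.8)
The plan is to verify $\id(\zeta^{\CX}_X)\leq 1$ by showing that $\Ext^2_{\mmod\CX}(F,\zeta^{\CX}_X)=0$ for every $F\in\mmod\CX$. Since $\mmod\CX$ has enough projectives, I would fix a projective presentation $( - ,X_1)\lrt( - ,X_0)\lrt F\lrt 0$ and set $K=\Ker(( - ,X_0)\lrt F)=\im(( - ,X_1)\lrt( - ,X_0))$, so that a dimension shift along the projective $( - ,X_0)$ gives $\Ext^2_{\mmod\CX}(F,\zeta^{\CX}_X)\cong\Ext^1_{\mmod\CX}(K,\zeta^{\CX}_X)$. The map $( - ,X_1)\lrt( - ,X_0)$ is $( - ,f)$ for some $f\colon X_1\lrt X_0$ in $\CX$; writing $N=\Ker f$ and $C=\im f$, closure of $\CX$ under submodules puts both $N$ and $C$ in $\CX$ (as submodules of $X_1$ and of $X_0$ respectively). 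Since $K=\im(( - ,X_1)\lrt( - ,C))$ and $( - ,C)\hookrightarrow( - ,X_0)$, left exactness of the Yoneda functor turns $0\lrt N\lrt X_1\lrt C\lrt 0$ into a length-one projective resolution
\[0\lrt( - ,N)\lrt( - ,X_1)\lrt K\lrt 0.\]
By the Yoneda Lemma this yields $\Ext^1_{\mmod\CX}(K,\zeta^{\CX}_X)\cong\Coker\big(\zeta^{\CX}_X(X_1)\lrt\zeta^{\CX}_X(N)\big)$, the map being the one induced by the inclusion $N\hookrightarrow X_1$.

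Next I would make $\zeta^{\CX}_X$ explicit. Because $\CX$ contains $\prj A$ and is closed under submodules, it is automatically closed under syzygies, so Lemma \ref{Lem-pd}$(i)$ applies: from a projective presentation $P\lrt Q\st{\ve}{\lrt} X\lrt 0$ one obtains the resolution $0\lrt( - ,\Omega(X))\lrt( - ,Q)\lrt\zeta^{\CX}_X\lrt 0$, and the embedding $\zeta^{\CX}_X\hookrightarrow\va_{\rho}(X)=( - ,X)$ of Remark \ref{Rem-Triangle} is induced by $\ve$. Consequently, for every $X'\in\CX$,
\[\zeta^{\CX}_X(X')=\im\big(\Hom_A(X',Q)\st{\ve_*}{\lrt}\Hom_A(X',X)\big),\]
that is, $\zeta^{\CX}_X(X')$ consists exactly of those morphisms $X'\lrt X$ that factor through the projective $Q$.

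It then remains to prove that $\zeta^{\CX}_X(X_1)\lrt\zeta^{\CX}_X(N)$ is surjective, which is the heart of the argument and the only point at which the hypothesis $\CX\subseteq{}^{\perp}A$ is used. Given $h\in\zeta^{\CX}_X(N)$, I would write $h=\ve s$ with $s\colon N\lrt Q$; the obstruction to extending $s$ along $N\hookrightarrow X_1$ lies in $\Ext^1_A(C,Q)$. But $C\in\CX\subseteq{}^{\perp}A$ and $Q$ is projective (a summand of a finite free module), so $\Ext^1_A(C,Q)=0$, whence $s$ extends to some $\tilde s\colon X_1\lrt Q$ and $\ve\tilde s\in\zeta^{\CX}_X(X_1)$ restricts to $h$. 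This makes the cokernel above vanish, so $\Ext^2_{\mmod\CX}(F,\zeta^{\CX}_X)=0$ for all $F$ and therefore $\id(\zeta^{\CX}_X)\leq 1$. The main obstacle is precisely this surjectivity step: everything hinges on recognising that the quotient $C=X_1/N$ again lies in $\CX$ (so closure under submodules is invoked a second time) and that orthogonality to $A$ annihilates the extension obstruction against the projective $Q$.
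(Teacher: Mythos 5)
Your proof is correct and takes essentially the same approach as the paper: both use closure under submodules to produce a length-two projective resolution of an arbitrary $F \in \mmod\CX$, reduce $\Ext^2(F,\zeta^{\CX}_X)=0$ to the surjectivity of a restriction map between values of $\zeta^{\CX}_X$, and kill the extension obstruction against the projective $Q$ using $\CX \subseteq {}^{\perp}A$. The paper's proof simply cites ``the same argument as in the proof of Lemma \ref{Lem-rigid}''; your write-up spells that argument out, including the point, left implicit in the paper, that the cokernel $C=\im f$ of $N\hookrightarrow X_1$ again lies in $\CX$ by a second use of closure under submodules.
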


\begin{proof}
Let $F \in \mmod\CX$ be an arbitrary object. Since $\CX$ is closed under submodules, we have a projective resolution
\[0 \lrt ( - ,X_2) \lrt ( - ,X_1) \lrt ( - ,X_0) \lrt F \lrt 0,\]
of $F$. For the proof, it is enough to show that the induced map
\[(( - ,X_1), \zeta^{\CX}_X) \lrt (( - ,X_2),\zeta^{\CX}_X)\]
is surjective, or in other words, $\Ext^2(F, \zeta^{\CX}_X)=0$. This follows easily using the same argument as in the proof of the above lemma.
\end{proof}

Let $T \in \mmod\La$ be a $\La$-module, where as usual $\La$ is an artin algebra. $T$ is called a tilting module if it satisfies the following conditions.
\begin{itemize}
\item [(a)] The projective dimension of $T$ is at most $1$;
\item [(b)] $T$ is rigid, i.e. $\Ext^1_{\La}(T,T)=0$;
\item [(c)] $T$ has $n$ indecomposable summands, where $n$ is the number of indecomposable direct summands of $\La$.
\end{itemize}

Dually $T \in \mmod\La$ is called a cotilting module if its injective dimension is at most $1$ and it satisfies the conditions (b) and (c) above.

\begin{remark}\label{AusEqu}
Let $\La$ be an artin algebra and $\CX$ be a contravariantly finite subcategory of $\mmod\La$ of finite type that contains projectives. Let $X$ be an additive generator of $\CX$, i.e. $\CX = \add(X)$, where $\add(X)$ denotes the set of all direct summands of finite direct sums of copies of $X$. Then $\End_{\La}(X)$ is called the $\CX$-Auslander algebra of $\La$ and is denoted by $\Aus(\CX)$, see Definition 6.3 of \cite{Be}. A well known fact is that $\Aus(\CX)$, up to Morita equivalence, is independent of the choice of the additive generator of $\CX$.
Moreover, there is an equivalence of abelian categories
\[e_X:\mmod\CX \rt \mmod\Aus(\CX)\]
defines for $F \in \mmod \CX$ by the evaluation of $F$ at $X$, denoted by $F(X)$. It is in a obvious way a right $\Aus(\CX)$-module.

In case $\CX=\mmod\La$, $\La$ is called of finite representation type and $\Aus(\CX)$ is called the Auslander algebra of $\La$, denoted by $\Aus(\La)$. In case $\CX=\Gprj\La$, $\La$ is called of finite Cohen-Macaulay type (CM-finite, for short). $\Aus(\Gprj\La)$ is then called the Cohen-Macaulay Auslander algebra of $\La$.
\end{remark}

\begin{theorem}\label{Main-1}
Let $\La$ be an artin algebra and $\CX$ be a contravariantly finite subcategory of $\mmod\La$ such that $\prj\La\subseteq \CX \subseteq {}^{\perp}\La$. Assume further that $\CX=\add(X)$ is of finite type. Set $\Ga=\Aus(\CX)$ and $T=\zeta^{\CX}_{X}(X)$. The following statements then hold true.
\begin{itemize}
\item[$(i)$] If $\CX$ is closed under syzygies, then $T$ is a tilting module over $\Ga$.
\item[$(ii)$] If, furthermore, $\CX$ is closed under submodules, then $T$ is a cotilting module over $\Ga$.
\end{itemize}
\end{theorem}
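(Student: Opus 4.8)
The plan is to transfer the whole problem to the functor category $\mmod\CX$ by means of the equivalence $e_X \colon \mmod\CX \rt \mmod\Ga$ of Remark \ref{AusEqu}, under which $T = e_X(\zeta^{\CX}_X)$, since evaluation at $X$ is precisely this equivalence. As $e_X$ is an equivalence of abelian categories it is exact and preserves projective and injective objects, hence it preserves projective and injective dimensions as well as all $\Ext$-groups. It therefore suffices to verify, inside $\mmod\CX$ and for the object $\zeta^{\CX}_X$, the analogues of the defining conditions (a), (b), (c) of a tilting (resp. cotilting) module.

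For part $(i)$, condition (a) is immediate from Lemma \ref{Lem-pd}$(i)$: writing $X = \bigoplus_{i} X_i$ with the $X_i$ the indecomposables of $\CX$ and using that $\zeta^{\CX}$ is additive, we get $\zeta^{\CX}_X = \bigoplus_i \zeta^{\CX}_{X_i}$, and each summand has projective dimension at most one because $\CX$ is closed under syzygies; applying $e_X$ gives $\pd_\Ga T \le 1$. Condition (b) is exactly Lemma \ref{Lem-rigid}, which yields $\Ext^1_{\mmod\CX}(\zeta^{\CX}_X, \zeta^{\CX}_X) = 0$ and hence $\Ext^1_\Ga(T,T) = 0$.

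The more delicate point, and the one I expect to be the main obstacle, is the summand count in condition (c). Here I would invoke the fact that the intermediate extension functor $\zeta^{\CX}$ is fully faithful and preserves indecomposable objects (see \ref{InterExtFunc}). Writing $X = \bigoplus_{i=1}^n X_i$ with the $X_i$ pairwise non-isomorphic indecomposables, additivity gives $\zeta^{\CX}_X = \bigoplus_{i=1}^n \zeta^{\CX}_{X_i}$; each $\zeta^{\CX}_{X_i}$ is indecomposable, and full faithfulness forces them to be pairwise non-isomorphic, so $\zeta^{\CX}_X$, and therefore $T$, has exactly $n$ indecomposable summands. It then remains to match this with $\Ga$: since $\Ga = \End_\La(X)$, the indecomposable projective right $\Ga$-modules $\Hom_\La(X, X_i)$ are in bijection with the $X_i$, so $\Ga$ itself has exactly $n$ indecomposable summands. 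This completes the verification that $T$ is tilting.

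For part $(ii)$, I would first observe that, because $\CX$ contains $\prj\La$ and is closed under submodules, it is automatically closed under syzygies (any $\Omega(X)$ is a submodule of a projective, hence lies in $\CX$); thus part $(i)$ applies and already delivers rigidity and the correct summand count. It therefore only remains to replace condition (a) by the cotilting requirement $\id_\Ga T \le 1$, which follows from Lemma \ref{Lem-id} together with the preservation of injective dimension under $e_X$. Hence $T$ is cotilting. Throughout, the substantive work is concentrated in condition (c) — keeping the indecomposable summands distinct and counting them against those of $\Ga$ — whereas the dimension bounds and rigidity are direct translations of the preceding lemmas across $e_X$.
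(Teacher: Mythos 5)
Your proposal is correct and follows essentially the same route as the paper's proof: conditions (a) and (b) via Lemmas \ref{Lem-pd} and \ref{Lem-rigid} (and Lemma \ref{Lem-id} for the cotilting case), and condition (c) via the full faithfulness and indecomposable-preservation of $\zeta^{\CX}$, matched against the indecomposable projectives $(-,X_i)$ of $\mmod\CX$. The only difference is that you spell out details the paper leaves implicit, notably that closure under submodules together with $\prj\La \subseteq \CX$ yields closure under syzygies, so part $(ii)$ really does reduce to part $(i)$ plus Lemma \ref{Lem-id}.
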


\begin{proof}
We just prove part $(i)$. Part $(ii)$ follows similarly. Conditions (a) and (b) of the definition of a tilting module follows from Lemmas \ref{Lem-pd} and \ref{Lem-rigid}, respectively. To see condition (c), note that similar to the proof of Lemma 2.2 of \cite{CS}, we can deduce that $\zeta^{\CX}$ is a full and faithful functor that preserves indecomposable objects. Hence (c) follows from the fact that for an indecomposable summand $M$ of $X$, $( - ,M)$ is an indecomposable projective object of $\mmod\CX$.
\end{proof}

Let $\La$ be an artin algebra such that both $\id_{\La}\La$ and $\id\La_{\La}$ are finite. It is known \cite{Z} that in this case, $\id_{\La}\La=\id\La_{\La}$, say $n$. Then $n$ is called the Gorenstein dimension of $\La$, denoted by $\Gdim(\La)$, and $\La$ is called the Iwanaga-Gorenstein algebra of Gorenstein dimension $n$, see \cite[Remark 2.4.6]{NRTZ}. Throughout the paper, we call such algebras Gorenstein of G-dimension $n$.

\begin{corollary}
Let $\La$ be a Gorenstein algebra of G-dimension $1$ which is of finite CM-type. Let $G$ be an additive generator of $\Gprj\La$. Then $\zeta^{\CG}_{G}(G)$ is both a tilting and a cotilting module over $\Aus(\Gprj \La)$, the Cohen-Macaulay Auslander algebra of $\La$.
\end{corollary}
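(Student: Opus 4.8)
The plan is to apply Theorem~\ref{Main-1} to the subcategory $\CX=\Gprj\La$, taking the additive generator to be $X=G$; then $\Ga=\Aus(\Gprj\La)$ is precisely the Cohen--Macaulay Auslander algebra and $T=\zeta^{\CG}_{G}(G)$ is exactly the module $\zeta^{\CX}_{X}(X)$ appearing there. Thus everything reduces to verifying that $\Gprj\La$ meets the hypotheses of both parts of that theorem.

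First I would dispose of the hypotheses common to parts $(i)$ and $(ii)$. Since $\La$ is CM-finite, $\Gprj\La$ is of finite type, and a subcategory of the form $\add(G)$ is automatically functorially finite, in particular contravariantly finite; this simultaneously supplies the finite-type and contravariant-finiteness conditions. The inclusion $\prj\La\subseteq\Gprj\La$ is standard, and $\Gprj\La\subseteq{}^{\perp}\La$ follows directly from the definition of Gorenstein projectivity: if $M$ is Gorenstein projective, then $M$ is a syzygy of a totally acyclic complex $\PP$ of projectives, and the acyclicity of $\Hom_{\La}(\PP,Q)$ for every projective $Q$ forces $\Ext^{i}_{\La}(M,\La)=0$ for all $i\geq 1$. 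Finally, $\Gprj\La$ is resolving, hence closed under syzygies. With these facts in hand, Theorem~\ref{Main-1}$(i)$ already yields that $T$ is a tilting module over $\Ga$.

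The remaining, and crucial, point is that $\Gprj\La$ is closed under submodules, and this is exactly where the hypothesis $\Gdim\La=1$ enters. Since $\La$ is Gorenstein of G-dimension $1$, we have $\id\La_{\La}\leq 1$, so $\Ext^{i}_{\La}(-,\La)=0$ for every $i\geq 2$; consequently the standard characterization of Gorenstein projectives over an Iwanaga--Gorenstein algebra reduces to $\Gprj\La=\{N\in\mmod\La : \Ext^{1}_{\La}(N,\La)=0\}$. Now let $N\subseteq M$ with $M\in\Gprj\La$, and consider the short exact sequence $0\rt N\rt M\rt M/N\rt 0$. Applying $\Hom_{\La}(-,\La)$ produces the exact sequence $\Ext^{1}_{\La}(M,\La)\rt\Ext^{1}_{\La}(N,\La)\rt\Ext^{2}_{\La}(M/N,\La)$, whose outer terms both vanish---the left because $M\in\Gprj\La$ and the right because $\id\La_{\La}\leq 1$---so $\Ext^{1}_{\La}(N,\La)=0$ and hence $N\in\Gprj\La$. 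Thus $\Gprj\La$ is closed under submodules, and Theorem~\ref{Main-1}$(ii)$ shows that $T$ is also a cotilting module over $\Ga$, completing the argument.

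I expect the closure under submodules to be the only genuine obstacle: it fails for general CM-finite algebras and rests essentially on the vanishing $\Ext^{\geq 2}_{\La}(-,\La)=0$ coming from $\id\La_{\La}\leq 1$, so the G-dimension~$1$ assumption is doing the real work. All the other hypotheses hold for any CM-finite algebra, which is why part $(i)$ (the tilting assertion) requires no restriction on the Gorenstein dimension.
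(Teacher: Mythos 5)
Your proposal is correct and follows exactly the paper's route: the paper's proof is the one-line observation ``set $\CX=\Gprj\La$ in Theorem \ref{Main-1}; since $\La$ has G-dimension $1$, $\Gprj\La$ is closed under submodules.'' Your verification of the remaining hypotheses (contravariant finiteness from CM-finiteness, $\Gprj\La\subseteq{}^{\perp}\La$, closure under syzygies) and your $\Ext$-vanishing argument for submodule closure just supply the details the paper leaves implicit.
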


\begin{proof}
In the above theorem set $\CX=\Gprj\La$. Just note that since $\La$ is of G-dimension $1$, $\Gprj\La$ is closed under submodules.
\end{proof}

Crawley-Boevey \cite{C-B} and later Schofield \cite{S} introduced the notion of quiver Grassmannians as varieties parametrizing subrepresentations of a quiver representation, in order to study the generic properties of representations of a quiver $\CQ$. Let $K$ be an algebraically closed field. By \cite{CFR}, if the quotient algebra $K\CQ/I$ of global dimension at most $2$,  admits a representation $M$ which is rigid and has both the injective and the projective dimension at most one, then the quiver Grassmannian ${\rm Gr}_e(M)$ is smooth and reduced, with irreducible and equidimensional connected components, where $e$ is a dimension vector for $\CQ$. See also \cite[Theorem 2.9]{CL}.
Using this result, Chen and Lu \cite[Theorem 3.10]{CL}, showed that if $K\CQ/I$ is a gentle algebra which is Gorenstein of G-dimension $1$, then its Cohen-Macaulay Auslander algebra is of global dimension at most two and admits a module $M$ with the above mentioned properties, and hence the quiver Grassmannian ${\rm Gr}_e(M)$ is smooth and reduced, where $e$ is a dimension vector for $\CQ^{\Aus}$. Our next corollary provides a generalization of this result. In particular, we remove the assumption that $\La$ is gentle.

\begin{corollary}\label{CL}
Let $\La$ be an artin algebra over an algebraically closed field. Let $\CX$ be as in Setup \ref{Setup} which is also closed under submodules, is contained in ${}^{\perp}\La$ and is of finite type. Let $\CQ$ denote the quiver of $\Aus(\CX)$ and $e$ be a dimension vector of it. Then the quiver Grassmannian ${\rm Gr}_e(\zeta^{\CX}_X(X))$ is smooth and reduced.
\end{corollary}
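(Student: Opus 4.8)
The plan is to reduce the statement to the geometric criterion already recalled above from \cite{CFR} (see also \cite[Theorem 2.9]{CL}): if $\Ga = K\CQ/I$ has global dimension at most two and $M$ is a representation that is rigid with $\pd_{\Ga} M \le 1$ and $\id_{\Ga} M \le 1$, then ${\rm Gr}_e(M)$ is smooth and reduced. Hence it suffices to verify that $\Ga = \Aus(\CX)$ has global dimension at most two and that $M = \zeta^{\CX}_X(X)$ is rigid of projective and injective dimension at most one.

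First I would note that, since $\CX$ contains $\prj\La$ and is closed under submodules, it is automatically closed under syzygies: for $X \in \CX$ a syzygy $\Omega(X)$ is a submodule of a projective $P \in \CX$, hence lies in $\CX$. Thus both hypotheses of Theorem \ref{Main-1} are satisfied, and $M = \zeta^{\CX}_X(X)$ is simultaneously a tilting and a cotilting $\Ga$-module. The tilting property gives rigidity $\Ext^1_{\Ga}(M,M)=0$ together with $\pd_{\Ga} M \le 1$, while the cotilting property gives $\id_{\Ga} M \le 1$. This disposes of all three homological conditions on $M$.

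It remains to check that $\gldim\Ga \le 2$. Through the equivalence $e_X \colon \mmod\CX \lrt \mmod\Ga$ of Remark \ref{AusEqu} this is the same as $\gldim(\mmod\CX) \le 2$, so I would argue inside the functor category, where the projective objects are the representables $\CX(-, X')$ with $X' \in \CX$. Given any $F \in \mmod\CX$, pick a projective presentation $\CX(-, X_1) \lrt \CX(-, X_0) \lrt F \lrt 0$ induced by a morphism $d \colon X_1 \to X_0$, and put $L = \Ker d$ in $\mmod\La$. Since $X_1 \in \CX$ and $\CX$ is closed under submodules, $L \in \CX$; and a morphism $f \colon X \to X_1$ satisfies $df=0$ exactly when it factors through the inclusion $L \hookrightarrow X_1$, so $\Ker(\CX(-,d)) \cong \CX(-, L)$ is again representable. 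This yields a projective resolution
\[0 \lrt \CX(-, L) \lrt \CX(-, X_1) \lrt \CX(-, X_0) \lrt F \lrt 0\]
of length two, whence $\pd F \le 2$. As $F$ was arbitrary, $\gldim(\mmod\CX) \le 2$ and therefore $\gldim\Ga \le 2$.

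With both ingredients established, the criterion of \cite{CFR} applies to $\Ga = K\CQ/I$ and $M = \zeta^{\CX}_X(X)$, giving that ${\rm Gr}_e(M)$ is smooth and reduced. I expect the only genuinely substantive step to be the global-dimension bound, that is, the observation that closure of $\CX$ under submodules forces the kernel of a map between representable functors to be representable; the remaining input is simply the combination of Theorem \ref{Main-1} with the cited geometric result. A routine point to record is that, over the algebraically closed base field, the additive generator $X$ presents $\Ga$ as a basic path algebra $K\CQ/I$, so that the hypotheses of \cite{CFR} are literally met.
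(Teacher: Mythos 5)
Your proposal is correct and takes essentially the same route as the paper: reduce to the criterion of \cite{CFR} (as recalled in \cite[Theorem 2.9]{CL}) by checking that the global dimension of $\Aus(\CX)$ is at most two (the paper dismisses this as ``easily seen''; your representable-kernel argument, using closure under submodules, is exactly the intended one) and that $\zeta^{\CX}_X(X)$ is rigid with projective and injective dimension at most one. The only cosmetic difference is that you obtain these last conditions via Theorem \ref{Main-1}, whereas the paper cites Lemmas \ref{Lem-pd}, \ref{Lem-rigid} and \ref{Lem-id} directly---the same content, since Theorem \ref{Main-1} is proved from precisely those lemmas, and your observation that submodule-closure plus $\prj\La\subseteq\CX$ forces syzygy-closure is needed in either formulation.
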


\begin{proof}
First note that since $\CX$ is closed under submodules, it is easily seen that the global dimension of $\Aus(\CX)$ is at most two. Now the result follows from \cite[Theorem 2.9]{CL}, in view of Lemmas \ref{Lem-pd}, \ref{Lem-rigid} and \ref{Lem-id}.
\end{proof}

We end this section by some examples. To this end we need to recall the following notions. Let $M$ and $N$ be $\La$-modules. We say that $N$ is generated, respectively cogenerated, by $M$ if there is an epimorphism $M^n \rt N$, respectively a monomorphism $N \rt M^n$, for some $n \in \N$. Let $\gen(M)$, respectively $\cogen(M)$, denote the full subcategory of $\mmod\La$ consisting of all modules that are generated, respectively cogenerated, by $M$.

\begin{example}\label{Example1}
\begin{itemize}
\item[(1)] Let $\La_n=k[x]/(x^n)$, where $k$ is an algebraically closed field. Then $\La_n$ is a self-injective algebra of finite representation type. In this case, $\zeta^{\La_n}_{M}(M)$ is the characteristic tilting module for the quasi-hereditary algebra $\Aus(\La)$, where $M$ is a basic additive generator for $\mmod \La_n$. See \cite[\S 2]{RZ} for more details.
\item[(2)] Let $\widehat{Q}$ be the direct sum of representatives of the isomorphism classes of all indecomposable projective-injective modules. Let $\mathcal{C}_{\La}:=\gen(\widehat{Q})\cap \cogen(\widehat{Q})$ be the subcategory of $\mmod \La$ consisting of all modules generated and cogenerated by $\mathcal{C}_{\La}.$  Crawley-Boevey and Sauter \cite{CS} proved that if $\rm{gldim}(\La)=2$, then the algebra $\La$ is an Auslander algebra if and only if there exists a tilting module $T$ in $\mathcal{C}_{\La}$. Furthermore, $T$ is the unique tilting module in $\mathcal{C}_{\La}$ and it is also a cotilting module. If $\La$ is a self-injective algebra of finite type, for basic addetive generator module $M$  let $\Gamma=\rm{End}_{\La}(T)$ denote the associated Auslander algebra. Then  $T \simeq \zeta^{\La}_{M}(M)$ is the unique tilting and cotilting module in $\mathcal{C}_{\Gamma}.$
\item[$(3)$] Let $\La$ be a basic hereditary artin algebra of finite type. Then in this case for basic additive generator $M$ of $\mmod \La$, $\zeta^{\La}_M(M)$ is just the trivial tilting module, i.e. $M=\La$ as a right $\La$-module.
\end{itemize}
\end{example}

\section{Relative Auslander algebras}
Assume, as before, that $\La$ is an artin algebra and $\CX$ is a contravariantly finite subcategory of $\mmod\La$ that contains projectives. Furthermore, assume that $\CX$ is of finite type with $X$ as an additive generator of it. In view of the equivalence $\mmod\CX \simeq \Aus(\CX)$ induced by the evaluation functor, Remark \ref{AusEqu}, we get the following recollement of $\mmod\Aus(\CX)$
\[\xymatrix{ \mmodd \Aus(\CX) \ar[rr]^{L}  && \mmod \Aus(\CX) \ar[rr]^{\mu=\va e^{-1}} \ar@/^1pc/[ll]^{L_{\rho}} \ar@/_1pc/[ll]_{L_{\la}} && \mmod A \ar@/^1pc/[ll]^{\mu_{\rho}} \ar@/_1pc/[ll]_{\mu_{\la}} }\]
where $\mmodd \Aus(\CX)$ denotes the kernel of $\va e^{-1}$.

The following commutative diagram
\[\xymatrix{\mmod\CX \ar[r]^{\va} \ar[d]^{e} & \mmod\La \ar[d]^1 \\ \mmod\Aus(\CX) \ar[r]^{ \ \ \va e^{-1}} & \mmod\La  }\]
in view of \ref{Eq-Recollement}, implies that this recollement is equivalent to $\CR(\CX,\La)$. Let us denote it by $\CR(\CX, \Ga)$, where $\Ga=\Aus(\CX)$.

By \ref{Rem-Bijection} a \TTF-triple is associated to this recollement. In our next theorem, we plan to study this \TTF-triple. Our strategy for this study is to transfer the problems related to $\mmod\Aus(\CX)$ to $\mmod\CX$, that has nicer homological properties, via the above mentioned equivalence, and then translate the results back to $\mmod\Aus(\CX)$.

Throughout, let $\CP^{\leq 1}(\Ga)$, resp. $\CI^{\leq 1}(\Ga)$, denote the full subcategory of $\mmod\Ga$ consisting of all objects of projective dimension, resp. injective dimension, at most $1$.

\begin{theorem}\label{Main12}
Let $\CX$ be as in Setup \ref{Setup} which is of finite type, is closed under submodules and is contained in ${}^{\perp}\La$. Set $T=\zeta^{\CX}_{X}(X)$, where $X$ is an additive generator of $\CX$. Consider the recollement $\CR(\CX, \Ga)$, where $\Ga=\Aus(\CX)$. Then the following statements hold.
\begin{itemize}
\item[$(i)$] $\cogen(T) \subseteq \Ker L_{\rho} = \CP^{\leq 1}(\Ga)$. Moreover, if $\La$ is self-injective then the inclusion becomes equality.
\item[$(ii)$] $\Ker L_{\la}= \gen(T) \subseteq \CI^{\leq 1}(\Ga)$. Moreover, if $\La$ is self-injective then the inclusion becomes equality.
\end{itemize}
\end{theorem}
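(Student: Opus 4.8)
The plan is to push everything along the exact equivalence $e\colon\mmod\CX\lrt\mmod\Ga$ of Remark \ref{AusEqu}, which by the commutative square preceding the theorem identifies $\CR(\CX,\La)$ with $\CR(\CX,\Ga)$. As $e$ is an exact additive equivalence it matches $\Ker L_{\la},\Ker L_{\rho}$ with $\Ker\ell_{\la},\Ker\ell_{\rho}$, the classes $\CP^{\leq 1}(\Ga),\CI^{\leq 1}(\Ga)$ with $\CP^{\leq 1},\CI^{\leq 1}$ computed in $\mmod\CX$, and $\gen(T),\cogen(T)$ with $\gen(\zeta^{\CX}_{X}),\cogen(\zeta^{\CX}_{X})$, since $T=\zeta^{\CX}_{X}(X)=e(\zeta^{\CX}_{X})$. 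So it suffices to prove, inside $\mmod\CX$, that $\cogen(\zeta^{\CX}_{X})\subseteq\Ker\ell_{\rho}=\CP^{\leq 1}$ and $\Ker\ell_{\la}=\gen(\zeta^{\CX}_{X})\subseteq\CI^{\leq 1}$, with both inclusions becoming equalities when $\La$ is self-injective. Two facts are used throughout: since $\CX$ is closed under submodules it is closed under syzygies, so Theorem \ref{Main-1} makes $\zeta^{\CX}_{X}$ tilting-cotilting, i.e. $\pd\zeta^{\CX}_{X}\leq 1$ and $\id\zeta^{\CX}_{X}\leq 1$; and $\gldim\mmod\CX\leq 2$ (proof of Corollary \ref{CL}).

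The two one-sided inclusions are formal consequences of these dimension bounds. For $F\in\cogen(\zeta^{\CX}_{X})$ I would pick $0\lrt F\lrt(\zeta^{\CX}_{X})^{n}\lrt C\lrt 0$; applying $\Hom(-,N)$ and using $\Ext^{2}((\zeta^{\CX}_{X})^{n},N)=0$ (from $\pd\zeta^{\CX}_{X}\leq 1$) together with $\Ext^{3}(C,N)=0$ (from $\gldim\leq 2$) gives $\Ext^{2}(F,N)=0$ for all $N$, so $F\in\CP^{\leq 1}$. Symmetrically, for $F\in\gen(\zeta^{\CX}_{X})$ a sequence $0\lrt K\lrt(\zeta^{\CX}_{X})^{n}\lrt F\lrt 0$ with $\Hom(N,-)$, $\id\zeta^{\CX}_{X}\leq 1$ and $\gldim\leq 2$ gives $\Ext^{2}(N,F)=0$, so $F\in\CI^{\leq 1}$.

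Write $\CS=\mmodd\CX=\Ker\va$; by \ref{Rem-Bijection} the recollement yields the \TTF-triple $(\Ker\ell_{\la},\CS,\Ker\ell_{\rho})$. The crux for $(ii)$ is the identity $\{F:\Hom(\zeta^{\CX}_{X},F)=0\}=\CS$: if the left side holds then, as $(-,P)=\zeta^{\CX}_{P}$ is a summand of $\zeta^{\CX}_{X}$ for each $P\in\prj\La$, we get $F(P)=\Hom((-,P),F)=0$ for all projectives, i.e. $F\in\CS$; conversely $F\in\CS$ gives $\va F=0$, whence $\Hom(\va_{\la}(X'),F)\cong\Hom(X',\va F)=0$ by the $(\va_{\la},\va)$-adjunction, and $\Hom(\zeta^{\CX}_{X'},F)$ embeds in this group (as $\zeta^{\CX}_{X'}$ is a quotient of $\va_{\la}(X')$ by Remark \ref{Rem-Triangle}), so it vanishes. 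Since $\zeta^{\CX}_{X}$ is tilting, $\gen(\zeta^{\CX}_{X})$ is a torsion class with torsion-free class $\{F:\Hom(\zeta^{\CX}_{X},F)=0\}=\CS$; a torsion pair being determined by its torsion-free class, $\gen(\zeta^{\CX}_{X})=\Ker\ell_{\la}$, which is the equality in $(ii)$. For $(i)$ it remains to show $\Ker\ell_{\rho}=\CP^{\leq 1}$. The inclusion $\subseteq$ uses the canonical sequence of Remark \ref{Interpretation}: for $F\in\Ker\ell_{\rho}$ it reads $0\lrt F\lrt(-,\va F)\vert_{\CX}\lrt F_{1}\lrt 0$ with $F_{1}\in\CS$, and since $\pd(-,\va F)\vert_{\CX}\leq 1$ (Lemma \ref{Lem-pd}$(ii)$) and $\pd F_{1}\leq 2$, we get $\pd F\leq 1$. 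For $\supseteq$, note first that any non-projective indecomposable $X'\in\CX$ has $\pd S_{X'}\geq 2$ (where $S_{X'}$ is the top of $(-,X')$ and lies in $\CS$): were $\pd S_{X'}\leq 1$, the syzygy $\mathrm{rad}(-,X')$ would be a projective $(-,W)$, and applying the exact functor $\va$ to $0\lrt(-,W)\lrt(-,X')\lrt S_{X'}\lrt 0$, with $\va S_{X'}=0$, would give $W\cong X'$, contradicting that $\mathrm{rad}(-,X')$ is a proper subobject of the finite-length object $(-,X')$. Consequently, if $F\notin\Ker\ell_{\rho}$ then its nonzero $\CS$-torsion subobject contains a simple $S_{X'}$ with $X'$ non-projective, and $0\lrt S_{X'}\lrt F\lrt F/S_{X'}\lrt 0$ makes $\Ext^{2}(F,N)\twoheadrightarrow\Ext^{2}(S_{X'},N)$ for every $N$, the right-hand side being nonzero for some $N$; hence $\pd F\geq 2$ and $\CP^{\leq 1}\subseteq\Ker\ell_{\rho}$.

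It remains to upgrade the two inclusions to equalities when $\La$ is self-injective, which I expect to be the \textbf{main obstacle}. Here $\inj\La=\prj\La\subseteq\CX$ and $\CX$ is closed under submodules, so every module embeds in an injective lying in $\CX$, forcing $\CX=\mmod\La$; thus $\Ga=\Aus(\La)$ is the ordinary Auslander algebra. I would derive the reverse inclusions by duality. The standard duality gives $\Ga^{\op}\cong\End_{\La^{\op}}(DX)=\Aus(\La^{\op})$, and $\La^{\op}$ is again self-injective, so the identities $\Ker\ell_{\rho}=\CP^{\leq 1}$ and $\gen=\Ker\ell_{\la}$ proved above for any admissible $\CX$ apply to $(\La^{\op},\mmod\La^{\op})$. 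Since $D\colon\mmod\Ga\lrt\mmod\Ga^{\op}$ exchanges $\CP^{\leq 1}\leftrightarrow\CI^{\leq 1}$ and $\gen\leftrightarrow\cogen$, and—being compatible with the Auslander-formula recollement—carries $\Ker\ell_{\la}\leftrightarrow\Ker\ell_{\rho}$ and $\zeta^{\CX}_{X}$ to the corresponding module over $\Ga^{\op}$, dualizing those two identities yields $\cogen(\zeta^{\CX}_{X})=\Ker\ell_{\rho}=\CP^{\leq 1}$ and $\gen(\zeta^{\CX}_{X})=\CI^{\leq 1}$ over $\Ga$. The delicate point, and the reason self-injectivity is indispensable, is the verification that $D$ intertwines the two Auslander-formula recollements, equivalently that $\zeta^{\CX}$ is compatible with $D$; this uses $\CX=\mmod\La$ crucially, since for a proper $\CX$ closed under submodules the dual subcategory $D\CX$ is closed under quotients rather than submodules and the symmetry breaks.
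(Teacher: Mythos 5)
Your reduction to $\mmod\CX$ via the evaluation equivalence is exactly the paper's starting point, and everything you do before the self-injective case is correct; in two places your route genuinely differs from the paper's. For $\Ker\ell_{\la}=\gen(\zeta^{\CX}_X)$ the paper works directly with costable objects (\cite[Lemma/Definition 2.4]{CS}) and comparisons of projective presentations, whereas you first prove $\{F\in\mmod\CX : \Hom(\zeta^{\CX}_X,F)=0\}=\mmodd\CX$ (both directions of your verification are sound) and then quote the tilting torsion pair $(\gen(T),\Ker\Hom(T,-))$ from classical tilting theory together with the fact that a torsion pair is determined by its torsion-free class; this is shorter and more conceptual, at the cost of making Theorem \ref{Main-1}(i) a prerequisite. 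For $\CP^{\leq 1}\subseteq\Ker\ell_{\rho}$, where the paper only says ``the argument as in the previous part,'' your contrapositive argument is a complete proof: every simple object $S_{X'}$ of $\mmodd\CX$ (so $X'$ indecomposable non-projective) has $\pd S_{X'}\geq 2$, since otherwise ${\rm rad}(-,X')$ would be representable and applying the exact functor $\va$ would identify it with $(-,X')$, contradicting finiteness of length; and every $F\notin\Ker\ell_{\rho}$ contains such a simple, so $\gldim\mmod\CX\leq 2$ forces $\pd F\geq 2$. Your dimension-shifting proofs of $\cogen(\zeta^{\CX}_X)\subseteq\CP^{\leq 1}$ and $\gen(\zeta^{\CX}_X)\subseteq\CI^{\leq 1}$ also work, and replace the paper's torsion-class argument and epi-mono factorization argument, respectively.

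The genuine gap is the one you flagged yourself: the ``moreover'' equalities for self-injective $\La$. Your observation that self-injectivity plus closure under submodules forces $\CX=\mmod\La$ is correct (and not made explicit in the paper), but after that your argument rests entirely on the claim that the duality $D\colon\mmod\Ga\lrt\mmod\Ga^{\op}$ intertwines $\CR(\mmod\La,\Ga)$ with $\CR(\mmod\La^{\op},\Ga^{\op})$, and (for part (i)) that $D$ carries $T'$ to $T$ --- and you explicitly decline to verify either statement, so an unproved assertion is doing all the work. Moreover, your diagnosis of why it should hold (``this uses $\CX=\mmod\La$ crucially'') points at the wrong place. Concretely: under the evaluation equivalences, the Serre class on the $\La$-side consists of the $\Ga$-modules killed by the idempotent projecting $X$ onto its summand $\La$, while the $\La^{\op}$-side Serre class is cut out by the projection of $DX$ onto its summand $\La^{\op}$; the duality $D$ matches the first class with the $\Ga^{\op}$-modules killed by the projection of $DX$ onto $D\La$. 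So the statement that actually needs proof is $\add(D\La)=\add(\La^{\op})$ in $\mmod\La^{\op}$, i.e.\ that vanishing on injectives coincides with vanishing on projectives --- which is precisely self-injectivity, not the equality $\CX=\mmod\La$. Granting it, $D$ swaps the outer classes of the two \TTF-triples, and the equality in (ii) follows by dualizing $\Ker\ell'_{\rho}=\CP^{\leq 1}(\Ga^{\op})$ without ever needing $DT'\cong T$; the equality in (i), however, does need $DT'\cong T$, for which you must additionally check that the intermediate extension commutes with $D$ (via its characterization as the unique lift with no nonzero subobject or quotient in the Serre class, \cite[\S 2.1]{CS}). None of these verifications appear in your text, so the self-injective case is not proved as written.

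For comparison, the paper settles both equalities by short direct arguments that avoid duality altogether: for (i), if $F\in\Ker\ell_{\rho}$ then self-injectivity embeds $\va(F)$ into a projective $P$, whence $F\hookrightarrow(-,\va(F))|_{\CX}\hookrightarrow(-,P)=\zeta^{\CX}_P\in\add(\zeta^{\CX}_X)$; for (ii), if $\id F\leq 1$ one shows the term $F_1$ of the canonical sequence satisfies $\id F_1\leq 1$, and then that an injective resolution $0\rt F_1\rt D(X_0,-)\rt D(X_1,-)\rt 0$, evaluated at $\prj\La=\inj\La$, forces $X_1\rt X_0$ to be an isomorphism, so $F_1=0$ and $F\in\gen(\zeta^{\CX}_X)$. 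You should either adopt these direct arguments or carry out the duality verifications above in full.
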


\begin{proof}
By \cite[Corollary 4.4]{PV} we have a \TTF-triple $(\Ker L_{\La},L(\mmodd \Ga),\Ker L_{\rho})$ associated to the recollement $\CR(\CX,\Ga)$.

$(i)$ Since $\Ker L_{\rho}$ is a torsion-free class, it is closed under subobjects. Also obviously it is closed under finite direct sums. Hence to show that $\cogen(T) \subseteq \Ker L_{\rho}$ it is enough to show that $T \in \Ker L_{\rho}$. But this is equivalent to show that  $\zeta^{\CX}_X \in \Ker\ell_{\rho}$. We prove this later statement. To this end, take a projective presentation $P \rt Q \rt X \rt 0$ of $X$ in $\mmod \La.$ Then as in the proof of Lemma \ref{Lem-pd}, we obtain the following short exact sequence
$$0 \rt (-, \Omega(X)) \rt (-, Q) \rt \zeta^{\CX}_X \rt 0,$$ which is a projective resolution of $\zeta^{\CX}_X$ in $\mmod \CX.$ In view of this short exact sequence we get the following diagram
\[ \xymatrix@C-0.5pc@R-0.5pc{ &&& 0 \ar[d] &0 \ar@{.>}[d] & \\ 0 \ar[r] & 0 \ar[r] \ar@{=}[d] & (-, \Omega(X)) \ar@{=}[r] \ar@{=}[d] & (-, \Omega(X)) \ar[r] \ar[d] & 0 \ar@{.>}[d] \ar[r] & 0 \\ 0\ar[r] & 0 \ar[r] & (-,\Omega(X)) \ar[r] & (-, Q)\ar[d] \ar[r] & \zeta^{\CX}_X \ar[r] \ar@{.>}[dl] & 0 \\ &&& \va_{\rho}(X)=(-, X) \ar[d] & & \\ &&& \CL_X \ar[d] &&\\ &&& 0 && .}\]
In particular, we get a short exact sequence
$$\eta: 0 \rt \zeta^{\CX}_X \rt \va_{\rho}(X) \rt \CL_X \rt 0.$$
Now we apply the left exact functor $\ell_{\rho}$ on the short exact sequence $\eta$ and use the fact that $\ell_{\rho }\va_{\rho}=0$, to deduce that $\ell_{\rho}(\zeta^{\CX}_X)=0$. So we are done.

Now we prove that $\Ker L_{\rho} = \CP^{\leq 1}(\Ga)$. This is equivalent to prove that $\Ker\ell_{\rho}=\CP^{\leq 1}(\mmod\CX)$. To this end, let $F \in \Ker\ell_{\rho}$.
Part $(ii)$ of Lemma \ref{Lem-pd} implies that $( - ,\va(F))|_{\CX}$ has projective dimension at most $1$. So the exact sequence
\[\xymatrix{0 \ar[r] & F \ar[r] & ( - ,\va(F))|_{\CX} \ar[r] & F_1 \ar[r] & 0},\]
implies that $\pd F \leq 1$, because $\pd F_1 \leq 2.$ On the other hand, let $F \in \mmod\CX$ be such that $\pd F\leq 1$. The argument as in the previous part shows that $F \in \Ker\ell_{\rho}$. So we have the equality.

Finally, assume that $\La$ is a self-injective algebra and let $M \in \Ker L_{\rho}$. We show that $M \in \cogen(T)$. Equally, we can show that $F=e^{-1}(M) \in \Ker\ell_{\rho}$ belongs to $\cogen(\zeta^{\CX}_X)$. This we do. Since $\La$ is self-injective, $\va(F)$ can be embedded into a projective module $P\in\mmod\La$. So there exists a monomorphism
\[\xymatrix{( - ,\va(F))|_{\CX} \ar[r]  & ( - ,P),}\]
in $\mmod\CX$. Combining with the exact sequence
\[\xymatrix{0 \ar[r] & F \ar[r] & ( - ,\va(F))|_{\CX} \ar[r] & F_1 \ar[r] & 0,}\]
of Remark \ref{Interpretation}, we get an exact sequence $0 \lrt F \lrt ( - ,P)$ in $\mmod\CX$. This implies that $F \in \cogen(\zeta^{\CX}_X)$, because $( - ,P)=\zeta^{\CX}_P$.

$(ii)$ To prove this part, it is equivalent to prove that $\Ker\ell_{\la}=\gen(\zeta^{\CX}_X) \subseteq \CI^{\leq 1}(\mmod\CX).$ We prove this.
Let $F \in \mmod\CX$ and let $P \lrt Q \lrt \va(F) \lrt 0$  be a projective presentation of $\va(F)$. By definition, we get a projective presentation
\[\xymatrix{( - ,P) \ar[r] & ( - ,Q) \ar[r] & \va_{\la}\va(F) \ar[r] & 0,}\]
of $\va_{\la}\va(F)$ in $\mmod\CX$. Since $\zeta^{\CX}_Q=( - ,Q)$ and $\prj\La \subseteq \add(X)$, we get $\va_{\la}\va(F) \in \gen(\zeta^{\CX}_X)$.

Now let $F \in \Ker\ell_{\la}$. By \cite[Lemma/Definition 2.4]{CS} it is a costable object and hence the natural map $\va_{\la}\va(F) \lrt F$ is an epimorphism. Therefore $F \in \gen(\zeta^{\CX}_X)$.

For the converse, note that since $\Ker\ell_{\la}$ is a torsion class, it is closed under quotients. Hence to show that $\gen(\zeta^{\CX}_X) \subseteq \Ker\ell_{\la}$, it is enough to show that $\zeta^{\CX}_X \in \Ker\ell_{\la}$. Let $P \lrt Q \lrt X \lrt 0$ be a projective presentation of $X$. Consider the commutative diagram
\[\xymatrix{ & ( - ,P)  \ar[r] \ar[d] & ( - ,Q) \ar[r] \ar@{=}[d] & \va_{\la}\va(\zeta^{\CX}_X) \ar[r] \ar[d] & 0 \\ 0 \ar[r] & ( - ,\Omega(X)) \ar[r] & ( - ,Q) \ar[r] & \zeta^{\CX}_X \ar[r] & 0.}\]
Note that by \cite[Lemma 2.1(i)]{CS}  $\va\zeta^{\CX}_X \cong X$. Hence the natural map $\va_{\la}\va(\zeta^{\CX}_X) \lrt \zeta^{\CX}_X$ is an epimorphism. This in turn implies that $\zeta^{\CX}_X \in \Ker\ell_{\la}$.

To see the inclusion, let $F \in \gen(\zeta^{\CX}_X)$. By \cite[Corollary VI.2.6]{ASS} there exists exact sequence $\zeta_1 \st{\Psi}{\lrt} \zeta_0 \lrt F \lrt 0$, with $\zeta_1$ and $\zeta_0$ in $\add(\zeta^{\CX}_X)=\zeta^{\CX}(\add(X))$, as $\zeta^{\CX}$ commutes with direct sums \cite[Lemma 2.1(5)]{CS}.

Since $\zeta^{\CX}$ is full and faithful \cite[Lemma 2.2]{CS}, we deduce that there exists morphism $X_1 \st{\psi}{\lrt} X_0$ in $\mmod\La$ such that $X_0, X_1 \in \add(X)=\CX$, $\zeta^{\CX}_{X_i}=\zeta_i$, $i=0,1$, and $\zeta^{\CX}(\psi)=\Psi$. Since $\CX$ is closed under submodules, $\psi$ can be written as $\psi=i\pi$, where $X_1 \st{\pi}{\lrt} X'$ is an epimorphism and $X' \st{i}{\lrt} X_0$ is a monomorphism with $X' \in \CX$. But $\zeta^{\CX}$ preserves epimorphisms and monomorphisms \cite[Lemma 2.1(4)]{CS}, hence it follows that $\Psi$ also has an epi-mono decomposition $\zeta_1 \st{\zeta^{\CX}(\pi)}{\lrt} \zeta' \st{\zeta^{\CX}(i)}{\lrt} \zeta_0$ with $\zeta'=\zeta^{\CX}_{X'}$. So we get a short exact sequence $0 \lrt \zeta' \lrt \zeta_0 \lrt F \lrt 0$. By Lemma \ref{Lem-id}, the injective dimension of $\zeta'$ and $\zeta_0$ are at most $1$. Hence $\id F \leq 1$.

Finally assume that $\La$ is self-injective and $F \in \mmod\CX$ is of injective dimension at most $1$. By Remark \ref{Interpretation} we have an exact sequence associated to the recollement $\CR(\CX,\La)$,
\[\xymatrix{0 \ar[r] & F_0 \ar[r] & \va_{\la}\va(F) \ar[r] & F \ar[r] & F_1 \ar[r] & 0,}\]
such that $F_0, F_1 \in \mmodd\CX$. Let $K$ denote the $\Ker(F \lrt F_1)$. We have seen that $\va_{\la}\va(F) \in \gen(\zeta^{\CX}_X)$. Hence $K \in \gen(\zeta^{\CX}_X)$ and so $\id K \leq 1$. Therefore, since $\id F\leq 1$, we deduce that $\id F_1\leq 1$. Let
\[\xymatrix{ 0 \ar[r] & D(X_0, - ) \ar[r]^{D(f, -)} & D(X_1, - ) \ar[r] & 0}\]
be an injective resolution of $F_1$, with $X_0, X_1 \in \CX \subseteq {}^{\perp}\La=\mmod\La$. Since $F_1$ vanishes on projectives, for every $P \in \prj\La$, the map $D(X_0,P) \lrt D(X_1,P)$ is an isomorphism. Since $\La$  is self-injective, $\prj\La=\inj\La$ and so $X_1 \lrt X_0$ is an isomorphism. Therefore $F_1=0$ and hence $F \cong K \in \gen(\zeta^{\CX}_X)$. This completes the proof.
\end{proof}

Now we are ready to prove our main theorem in this section.

\begin{theorem}
Let $\La$ and $\La'$ be Gorenstein algebras of G-dimension $1$ which are of finite CM-type. Then $\La$ and $\La'$ are Morita equivalent if and only if their Cohen-Macaulay Auslander algebras, $\Ga=\Aus(\Gprj\La)$ and $\Ga'=\Aus(\Gprj\La')$, are Morita equivalent.
\end{theorem}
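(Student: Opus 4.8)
The plan is to exploit that both Cohen--Macaulay Auslander algebras occur as the middle term of a recollement whose associated \TTF-triple is, by Theorem \ref{Main12}, pinned down entirely by a single Morita-invariant subcategory. The ``only if'' direction is routine: a Morita equivalence induces an equivalence $\mmod\La \simeq \mmod\La'$ which preserves projectives and the $\Ext$-vanishing conditions defining Gorenstein projectivity, hence restricts to an equivalence $\Gprj\La \simeq \Gprj\La'$. This carries an additive generator $G$ of $\Gprj\La$ to an additive generator of $\Gprj\La'$, so $\End_{\La}(G)=\Aus(\Gprj\La)$ is isomorphic to the endomorphism ring of an additive generator of $\Gprj\La'$, which by Remark \ref{AusEqu} is Morita equivalent to $\Aus(\Gprj\La')$.

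For the ``if'' direction, I would first record that the hypotheses of Theorem \ref{Main12} hold for both algebras: since $\La$ and $\La'$ are Gorenstein of G-dimension $1$ and CM-finite, $\Gprj\La$ (resp. $\Gprj\La'$) is contravariantly finite, contains the projectives, is contained in ${}^{\perp}\La$ (resp. ${}^{\perp}\La'$), is of finite type, and is closed under submodules. Thus one may form the recollements $\CR(\Gprj\La,\Ga)$ and $\CR(\Gprj\La',\Ga')$, and by \ref{Rem-Bijection} each carries an associated \TTF-triple, namely $(\Ker L_{\la},\, L(\mmodd\Ga),\, \Ker L_{\rho})$ in $\mmod\Ga$, and its analogue in $\mmod\Ga'$. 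The crucial point is Theorem \ref{Main12}(i), which identifies the torsion-free class as $\Ker L_{\rho}=\CP^{\leq 1}(\Ga)$. Because $(L(\mmodd\Ga),\Ker L_{\rho})$ and $(\Ker L_{\la}, L(\mmodd\Ga))$ are torsion pairs, the whole triple is recovered from its last term by taking perpendiculars, $L(\mmodd\Ga)={}^{\perp}(\Ker L_{\rho})$ and $\Ker L_{\la}={}^{\perp}(L(\mmodd\Ga))$; in short, the entire \TTF-triple is determined by the subcategory $\CP^{\leq 1}(\Ga)$.

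Next I would let $\Psi\colon \mmod\Ga \lrt \mmod\Ga'$ be the equivalence arising from a Morita equivalence $\Ga\simeq\Ga'$. Projective dimension is preserved by any equivalence of module categories, so $\Psi(\CP^{\leq 1}(\Ga))=\CP^{\leq 1}(\Ga')$; that is, $\Psi$ matches the torsion-free classes of the two triples. Since $\Psi$ preserves $\Hom$-orthogonality it commutes with the perpendicular operations, whence it carries $L(\mmodd\Ga)$ and $\Ker L_{\la}$ onto the corresponding classes for $\Ga'$. Therefore $\Psi$ sends the \TTF-triple of $\mmod\Ga$ to that of $\mmod\Ga'$. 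Transporting the recollement $\CR(\Gprj\La,\Ga)$ along $\Psi$ produces a recollement of $\mmod\Ga'$ whose associated \TTF-triple coincides with that of $\CR(\Gprj\La',\Ga')$, so by the bijection of \ref{Rem-Bijection} the two recollements are equivalent in the sense of \ref{Eq-Recollement}. Comparing the right-hand terms of equivalent recollements yields an equivalence $\mmod\La \simeq \mmod\La'$, and by Morita theory $\La$ and $\La'$ are Morita equivalent.

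The main obstacle is the reduction in the middle paragraph: everything rests on the \emph{equality} $\Ker L_{\rho}=\CP^{\leq 1}(\Ga)$ of Theorem \ref{Main12}(i), rather than the mere inclusion $\cogen(T)\subseteq \Ker L_{\rho}$. It is precisely this identification --- valid for G-dimension $1$ without any self-injectivity assumption --- that renders the torsion-free class of the \TTF-triple visible to an arbitrary Morita equivalence and lets the equivalence descend through the recollement to the quotient $\mmod\La$. The point to verify carefully is that the perpendicular reconstruction of $L(\mmodd\Ga)$ and $\Ker L_{\la}$ from $\Ker L_{\rho}$ invokes only the torsion-pair axioms, and not any choice of the functors in the recollement, so that it is genuinely transported by $\Psi$.
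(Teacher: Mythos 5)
Your proposal is correct and takes essentially the same route as the paper: both arguments hinge on the equality $\Ker L_{\rho}=\CP^{\leq 1}(\Ga)$ from Theorem \ref{Main12}, the fact that any Morita equivalence preserves $\CP^{\leq 1}$, and the recovery of the torsion class $L(\mmodd\Ga)$ as the left Hom-perpendicular ${}^0\CP^{\leq 1}(\Ga)$ via the torsion-pair axioms. The only cosmetic difference is the final descent, where the paper restricts the equivalence to the Serre subcategories $\mmodd\Gprj\La\simeq\mmodd\Gprj\La'$ and passes to the quotients $\mmod\La$ and $\mmod\La'$, whereas you transport the whole recollement and invoke the bijection of \ref{Rem-Bijection}; both justifications lead to the same conclusion.
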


\begin{proof}
By \ref{Rem-Bijection},
\[(\mmodd\Gprj\La, \Ker\ell_{\rho}) \ \ {\rm and } \ \ (\mmodd\Gprj\La', \Ker\ell_{\rho}')\]
are torsion pairs. This, in view of the above theorem implies that
\[ \ \ \mmodd\Gprj\La={}^0\CP^{\leq 1}(\mmod \Gprj \La) \ \ {\rm and } \ \ \mmodd\Gprj\La'={}^0\CP^{\leq 1}(\mmod \Gprj \La'),\]
where for a class $\CY$ of abelian category  $\CA$, ${}^0\CY$ stands for the class of all $M \in \CA$ such that $\Hom_{\CA}(M,\CY)=0$. Now assume that $\Ga$ and $\Ga'$ are Morita equivalent, or equivalently $\mmod \Gprj \La$ and $\mmod \Gprj \La'$ are equivalent, say via $\Phi$. Since $\Phi$ is an exact functor, it maps $\CP^{\leq 1}(\mmod \Gprj \La)$ to $\CP^{\leq 1}(\mmod \Gprj \La')$ and hence maps ${}^0\CP^{\leq 1}(\mmod \Gprj \La)$ to ${}^0\CP^{\leq 1}(\mmod \Gprj \La')$. Therefore it induces an equivalence
\[\mmodd\Gprj\La \st{\Phi|}{\lrt} \mmodd\Gprj\La'.\]
This, in turn, implies that there exists an equivalence $\mmod\La \lrt \mmod\La'$.
Conversely, if $\La$ and $\La'$ are Morita equivalent, then $\Gprj \La \simeq \Gprj \La'$. Hence $\mmod \Gprj \La \simeq \mmod \Gprj \La'$, and so equivalently $\Ga$ and $\Ga'$ are Morita equivalent.
\end{proof}

\begin{remark}\label{KZ}
An artin algebra $\La$ is called CM-free if the inclusion $\prj\La \subseteq \Gprj\La$ is an equality. Kong and Zhang \cite{KZ} introduced a map $\Aus$ from the class of all CM-finite artin algebras up to Morita equivalence to the class of all CM-free artin algebras up to Morita equivalence. This map sends Gorenstein CM-finite algebras to algebras of finite global dimension. They showed that this map is surjective. Above theorem, in particular, implies that restricted to the Gorenstein algebras of G-dimension $1$, this map is injective.

However, it can not be injective on the class of Gorenstein algebras of G-dimension more than one. To see this, let $\La$ be a self-injective and non-semisimple artin algebra of finite type. Let $M$ be an additive generator of $\mmod \La$. Since $\La$ is self-injective, $\Gprj\La\simeq\mmod \La$. Moreover, $\Gprj\End_{\La}(M)\simeq \prj\End_{\La}(M)$, because $\End_{\La}(M)$ is an algebra of global dimension 2 or a Gorenstein algebra of G-dimension 2. Then $\Aus(\Gprj\La)$ and $\Aus(\Gprj\End_{\La}(M))$ are Morita equivalent but $\La$ and $\End_{\La}(M)$ are not Morita equivalent.
\end{remark}


\begin{thebibliography}{9999}
\bibitem [AHK]{AHK} {\sc J. Asadollahi, R. Hafezi, M. H. Keshavarz,}  {\sl Categorical Resolutions of Bounded Derived Categories of artin algebras,} arXiv:1701.00073v1 [math.RT].

\bibitem [ASS]{ASS} {\sc I. Assem, D. Simson, A. Skowronski}, {\sc Elements of the Representation Theory of Associative Algebras: Volume 1: Techniques of Representation Theory,} Cambridge University Press, 472 pages

\bibitem [A]{Aus74} {\sc M. Auslander,} {\sl Representation theory of Artin algebras. II,} Comm. Algebra {\bf 1} (1974), 269-310.

\bibitem [BBD]{BBD} {\sc A.A. Beilinson, J. Bernstein, P. Deligne,}  {\sl Faisceaux pervers,} Ast\'{e}risque {\bf 100} (1982).

\bibitem [Be]{Be} {\sc A. Beligiannis,} {\sl On algebras of finite Cohen-Macaulay type,} Adv. Math. {\bf 226} (2011), 1973-2019.

\bibitem [BR]{BR} {\sc A. Beligiannis, I. Reiten,} {\sl Homological and homotopical aspect of torsion theories,} Mem. Amer. Math. Soc. {\bf 188}, 2007.

\bibitem [BMR1]{BMR} {\sc A. Buan, R. Marsh, I. Reiten,} {\sl Cluster-tilted algebras of finite representation type,} J. Algebra {\bf 306} (2006), 412-431.

\bibitem [BMR2]{BMRT} {\sc A. Buan, R. Marsh, I. Reiten,} {\sl Cluster-tilted algebras,} Trans. Amer. Math. Soc. {\bf 359} (2007), 323-332.

\bibitem [CFR]{CFR} {\sc G. Cerulli Irelli, E. Feigin, M. Reineke,} {\sl Desingularization of quiver Grassmannians for Dynkin quivers,} Adv. Math. {\bf 245} (2013) 182-207.

\bibitem [CL]{CL} {\sc X. Chen, M. Lu,} {\sl Desingularization of quiver grassmannians for gentle algebras,} Algebras Represent. Theory {\bf 19} (2016), 1321-1345.

\bibitem [C-B]{C-B} {\sc W. W. Crawley-Boevey,} {\sl Maps between representations of zero-relation algebras,} J. Algebra {\bf 126} (1989), 259-263.

\bibitem [CB]{CB} {\sc W. Crawley-Boevey,} {\sl Locally finitely presented additive categories,} Comm. Algebra {\bf 22} (1994), 1641-1674.

\bibitem [CS]{CS} {\sc W. Crawley-Boevey, J. Sauter,} {\sl On quiver Grassmannians and orbit closures for representation-finite algebras,} Math. Z. {\bf 285} (2017), 367-395.

\bibitem [GLS]{GLS} {\sc C. Geiss, B. Leclerc and J. Schr\"{O}er,} {\sl Quivers with relations for symmetrizable Cartan matrices I: Foundations,} Invent. Math. {\bf 209} (2017), 61-158.

\bibitem [IS]{IS} {\sc O. Iyama, {\O}. Solberg,} {\sl Auslander-Gorenstein algebras and precluster tilting,} arXiv:1608.04179v1 [math.RT].

\bibitem [KR]{KR} {\sc B. Keller, I. Reiten,} {\sl Cluster-tilted algebras are Gorenstein and stably Calabi-Yau,}  Adv. Math. {\bf 211} (2007), 123-151.

\bibitem [KZh]{KZh} {\sc S. Koenig, B. Zhu,} {\sl From triangulated categories to abelian categories-cluster tilting in a general frame-work,} Math. Z. {\bf 258} (2008), 143-160.

\bibitem [KZ]{KZ} {\sc F. Kong, P. Zhang,} {\sl From CM-finite to CM-free,} J. Pure Appl. Algebra {\bf 220} (2016) 782-801.

\bibitem [NRTZ]{NRTZ} {\sc V. C. Nguyen, I. Reiten, G. Todorov, Sh. Zhu}, {\sl Dominant dimension and tilting modules,} arXiv:1706.00475v2 [math.RT].

\bibitem [PS]{PS} {\sc M. Pressland, J. Sauter}, {\sl Special tilting modules for algebras with positive dominant dimension,} arXiv:1705.03367v1 [math.RT].

\bibitem [PV]{PV} {\sc C. Psaroudakis, Jorge Vit\'{o}ria,} {\sl Recollements of Module Categories,} Appl. Cat. Str. {\bf 22}(4) (2014), 579-593.

\bibitem [RZ]{RZ} {\sc C. M. Ringel, P. Zhang,} {\sl From submodule categories to preprojective algebras,} Math. Z. {\bf 278} (2014), 55-73.

\bibitem [S]{S} {\sc A. Schofield,} {\sl Generic representations of quivers,} Proc. London Math. Soc. {\bf 65} (1992), 46-64.

\bibitem [Z]{Z} {\sc A. Zaks,} {\sl Injective dimension of semiprimary rings,} J. Algebra {\bf 13} (1969), 73-86.
\end{thebibliography}
\end{document}